\newtheorem{thm}{Theorem}[section]
\newtheorem{lem}[thm]{Lemma}
\theoremstyle{definition}
\def\R{\mathbb{R}}
\def\P{\mathbb P}
\def\E{\mathbb E}
\journal{.}
\begin{document}
\selectlanguage{english}
\begin{frontmatter}
%\begin{center}

%\vspace*{3cm}
%Nonlinear Analysis: Modelling and Control.\\
%\copyright\ Vilnius University\\[24pt]
%\vspace*{1cm}
%\LARGE
\title{Developing new techniques for obtaining the threshold of a stochastic SIR epidemic model with $3$-dimensional Lévy process}
%Modelling, Systems and Technologies of Information
%\\[6pt]
%\vspace*{1cm}
%\small
\author{Driss Kiouach\footnote{Corresponding author.\\
E-mail addresses: \href{d.kiouach@uiz.ac.ma}{d.kiouach@uiz.ac.ma} (D. Kiouach), \href{yassine.sabbar@edu.uiz.ac.ma}{yassine.sabbar@edu.uiz.ac.ma} (Y. Sabbar)} and  Yassine Sabbar }
%%\\[6pt]
\address{LPAIS Laboratory, Faculty of Sciences Dhar El Mahraz, Sidi Mohamed Ben Abdellah University, Fez, Morocco.}   
%\\[6pt]
%Received: -- January 2018.
%\end{center}
\vspace*{1cm}

\begin{abstract}
This paper considers the classical SIR epidemic model driven by a multidimensional Lévy jump process. We consecrate to develop a mathematical method to obtain the asymptotic properties of the perturbed model. Our method differs from previous approaches by the use of the comparison theorem, mutually exclusive possibilities lemma, and some new techniques of the stochastic differential systems. In this framework,  we derive the threshold which can determine the existence of a unique ergodic stationary distribution or the extinction of the epidemic. Numerical simulations about different perturbations are realized to confirm the obtained theoretical results. \vskip 2mm

\textbf{Keywords:} SIR epidemic model; asymptotic properties; white noise; Lévy jumps; stationary distribution; ergodic property.

\textbf{Mathematics Subject Classification:} 92B05; 93E03; 93E15.
\end{abstract}
\end{frontmatter}
\
\nocite{2009ProcDETAp}
%\cite{42,1})
%\cite{3,39,40,41}
\section{Introduction}
The stochastic systems are largely used in order to describe and control the dissemination of diseases into a population \cite{1}. It will continue to be one of the vigorous themes in mathematical biology due to its significance \cite{2}. The stochastic SIR epidemic model with mass action rate is a standard model among many mathematical models that present the first tentative to understand the random transmission mechanisms of infectious epidemics \cite{3}. Taking the stochastic disturbances into account, the traditional perturbed SIR epidemic model is described by the following model:
\begin{flalign}
&\begin{cases}
dS(t)=\big(A-\mu_1 S(t)-\beta S(t)I(t)\big)dt+S(t)dH_1(t),\\
dI(t)=\big(\beta S(t)I(t) -(\mu_2+\gamma)I(t)\big)dt+I(t)dH_2(t),\\
dR(t)=\big(\gamma I(t)-\mu_1 R(t)\big)dt+R(t)dH_3(t),
\end{cases}&
\label{s1}
\end{flalign}
where $H(t)=(H_1(t),H_2(t),H_3(t))$ is a $3$-dimensional  stochastic process modeling the intensity of random perturbations of the system. $S(t)$ denotes the number of individuals sensitive to the disease, $I(t)$ denotes the number of contagious individuals and $R(t)$ denotes the number of recovered individuals with full immunity. The positive parameters of the perturbed model (\ref{s1}) are given in the table \ref{t1}. Before explaining the aim of our contribution, we first present the following cases:
\begin{enumerate}
\item Case 1: $H(t)=0$. The system (\ref{s1}) becomes deterministic which is the object of extensive
studies. The equilibrium of (\ref{s1}) is characterized by the basic reproduction number $\mathcal{R}_0=\frac{\beta A}{\mu_1(\mu_2+\gamma)}$ which is the threshold between the persistence and the extinction of a disease \cite{5}. If $\mathcal{R}_0\leq 1$, then the system (\ref{s1}) has only the disease-free equilibrium $P^0$ which is globally asymptotically stable; this means that the disease will extinct. If $\mathcal{R}_0 > 1$, $P^0$ will become unstable, therefore there exists a globally asymptotically stable equilibrium $P^*$; this means that the disease will persist.
\begin{center}
\begin{tabular}{ll}
\hline
Parameters \hspace*{0.5cm} & Interpretation \\
\hline
$A$ & The recruitment rate corresponding to births and immigration.  \\
$\mu_1$ & The natural mortality  rate. \\
$\beta$ & The transmission rate from infected to susceptible individuals. \\
$\gamma$ & The rate of recovering. \\
$\mu_2=\mu_1+\alpha$ & The general mortality rate, where $\alpha>0$ is the disease-related death rate. \\
\hline
\end{tabular}
\captionof{table}{Biological meanings of the parameters in model (\ref{s1}).}
\label{t1}
\end{center}
\item Case 2: $H_i(t)=\sigma_i W_i(t)$, $(i=1,2,3)$ where  $W_i(t)$ $(i=1,2,3)$ are independent standard Brownian motions and $\sigma_i$ $(i=1,2,3)$ are the intensities of environmental white noises \cite{new0}. There are numerous significant works that analyzed the dynamics of the model (\ref{s1}) with white noises. For instance:
\begin{enumerate}
\item In \cite{54}, the authors investigate the asymptotic behavior of the model (\ref{s1}) around the disease-free equilibrium of the deterministic model.
\item In \cite{55}, the authors analyze the long-time behavior of the stochastic SIR epidemic model (\ref{s1}). Precisely, they discussed the convergence of densities of the solution in $L^1$.
\end{enumerate}
\item Case 3: $H_i(t)=\sigma_iW_i(t)+\int^t_0\int_Z\eta_i(u) \widetilde{N}(dt,du)$ where $W_{i}(t)$ $(i = 1, 2,3)$ are independent Brownian motions and $\sigma_{i}>0$ $(i = 1, 2,3)$ are their intensities. $N$ is a Poisson counting measure with compensating martingale $\widetilde{N}$ and characteristic measure $\nu$ on a measurable subset $Z$ of $(0,\infty)$ satisfying $\nu(Z)<\infty$. $W_{i}(t)$ $(i = 1, 2,3)$ are independent of $N$. It assumed that $\nu$ is a Lévy measure such that $\widetilde{N}(dt,du)=N(dt,du)-\nu(du)dt$. The bounded function $\eta_i:\; Z\times\Omega\to\R$ is $\mathfrak{B}(Z)\times\mathcal{F}_t$-measurable and  continuous with respect to $\nu$. Our work considers the Lévy jumps process case and treats the following model:
\begin{flalign}
&\begin{cases}
dS(t)=\big(A-\mu_1 S(t)-\beta S(t)I(t)\big)dt+\sigma_1 S(t) dW_1(t)+\int_Z \eta_1(u)S(t^{-})\widetilde{N}(dt,du),\\
dI(t)=\big(\beta S(t)I(t) -(\mu_2+\gamma)I(t)\big)dt+\sigma_2 I(t) dW_2(t)+\int_Z \eta_2(u)I(t^{-})\widetilde{N}(dt,du),\\
dR(t)=\big(\gamma I(t)-\mu_1 R(t)\big)dt+\sigma_3 R(t) dW_3(t)+\int_Z \eta_3(u)R(t^{-})\widetilde{N}(dt,du),
\end{cases}&
\label{s2}
\end{flalign}
where $S(t^{-})$, $I(t^{-})$ and $R(t^{-})$ are the left limits of $S(t)$, $I(t)$ and $R(t)$, respectively. The jumps process used to model some unexpected and severe environmental disturbances (tsunami, floods, earthquakes, hurricanes, whirlwinds, etc.) on the disease outbreak.
\end{enumerate}
The previous contributions on the dynamic behavior of the model (\ref{s2}) can be summarised as follows:
\begin{enumerate}
\item In \cite{56}, the authors examined how the Lévy noise influences the behavior around the equilibriums.  More precisely, they investigated the asymptotic behavior of the model (\ref{s2}) around the disease-free equilibrium $P^0$ of the deterministic model as well as the dynamics around the endemic equilibrium $P^{*}$.
\item In \cite{57}, the authors proved that the parameter
\begin{align*}
\mathcal{T}^s_0=\Big(\mu_2+\gamma\Big)^{-1}\left(\frac{\beta A}{\mu_1}-\frac{\sigma_2^2}{2}-\int_Z \eta_2(u)-\ln(1+\eta_2(u))\nu(du)\right)
\end{align*}
is the threshold of the stochastic model (\ref{s2}). More specifically, if $\mathcal{T}^s_0<1$, the epidemic eventually vanishes with probability one; while if $\mathcal{T}^s_0>1$, the disease persists almost surely.
\end{enumerate}
As far as we know, no previous research has investigated the ergodicity of the stochastic system (\ref{s2}). It is of interest to study the long term behavior of the stochastic epidemic model (\ref{s2}) which provides a link between mathematical study, actual diseases, and public health planning. Our contribution aims to develop a mathematical method to study the ergodicity of the model (\ref{s2}) as an important asymptotic property which means that the stochastic model has a unique stationary distribution that predicts the survival of the infected population in the future. Moreover, this work focuses on solving the problem overlooked by many researchers. For instance, in \cite{58}, the authors used the existence of the stationary distribution of an auxiliary stochastic differential equation for establishing the threshold expression of the stochastic chemostat model with Lévy jumps. However, the obtained threshold still unknown due to the ignorance of the explicit form of the existed stationary distribution. Without using the stationary distribution of the auxiliary process, we will exploit new techniques in order to obtain the explicit form of the threshold which can close the gap left by using the classical method. Further, we employe the Feller property,  the mutually exclusive possibilities lemma and the stochastic comparison theorem to prove that $\mathcal{T}^s_0$ is the threshold between the existence of the ergodic stationary distribution and the extinction. It should be noted that the approach used to prove the ergodicity is different from the Khasminskii method widely used in the literature (see for example \cite{new1,new2,new3}), and the method used to prove the  extinction is different from that used in \cite{57}.\\

Our work is organized as follows. In section \ref{sec1}, we show that there exists a unique global positive solution to the system (\ref{s2}) with any positive initial value. Under suitable assumptions, the threshold of the stochastic model is obtained in section \ref{sec2}. One example is provided to demonstrate our analytical results in section \ref{sec3}. Finally, a conclusion is presented to end this paper.
\section{Well-posedness of the stochastic model (\ref{s2})} \label{sec1}
For the purpose of well analyzing our model (\ref{s2}), it necessary that we make the following standard assumptions:\\

\begin{itemize}
\item \textbf{($A_1$)} We assume that for a given $K>0$, there exists a constant $L_{K}>0$ such that
\begin{align*}
\int_Z |F_i(x,u)-F_i(y,u)|^2\nu(du)<L_{K}|x-y|^2,\hspace*{0.1cm}\forall \;|x|\vee|y|\leq K,
\end{align*}
where $F_i(x,u)=x\eta_i(u)$ $(i=1,2,3)$.
\item \textbf{($A_2$)} $\forall u\in Z$, we assume that $1+\eta_i(u)>0$, $(i=1,2,3)$ and $\int_Z\big( \eta_i(u)-\ln (1+\eta_i(u))\big)\nu(du)<\infty$.
\item \textbf{($A_3$)} We suppose that exists a constant $\kappa>0$, such that $\int_Z \big(\ln(1+\eta_i(u))\big)^2\nu(du)\leq\kappa<\infty$.
\item \textbf{($A_4$)} We assume that for some $p\geq\frac{1}{2}$, $\chi_2=\mu_1-\frac{(2p-1)}{2}\max\{\sigma_1^2,\sigma_2^2\}-\frac{1}{2p}\ell$, where $$\ell=\int_Z\big((1+\eta_1(u)\vee \eta_2(u))^{2p}-1-\eta_1(u)\wedge\eta_2(u)\big)\nu(du)<\infty.$$
\end{itemize}
By the assumption \textbf{($A_1$)}, the coefficients of the system (\ref{s2}) are locally Lipschitz continuous, then for any initial value $(S(0), I(0), R(0))\in \R^{3}_{+}$ there is a unique local solution $(S(t), I(t), R(t))$ on $[0,\tau_{e})$, where $\tau_{e}$ is the explosion time. In the following theorem, our goal is to show that the solution is positive and global. 
\begin{thm}
For any initial value $(S(0),I(0),R(0))\in \R^3_{+}$, there exists a unique positive solution $(S(t),I(t),R(t))$ of the system (\ref{s2}) on $t\geq 0$, and the solution will stay in $\R^{3}_{+}$ almost surely. 
\label{thmp}
\end{thm}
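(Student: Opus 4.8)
The plan is to prove global existence and positivity via the standard Lyapunov-function argument for stochastic differential equations with jumps, adapting the classical Khasminskii-type scheme to the Lévy setting. Since assumption \textbf{($A_1$)} already guarantees a unique local solution on $[0,\tau_e)$, the entire task reduces to showing that the explosion time $\tau_e=\infty$ almost surely. The mechanism is to trap the solution between stopping times that force it out of every compact subregion of $\R^3_+$, and to show the solution cannot escape in finite time.

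First I would fix a sufficiently large $k_0>0$ so that the initial value $(S(0),I(0),R(0))$ lies in the box $[1/k_0,k_0]^3$, and for each integer $k\geq k_0$ define the stopping time $\tau_k=\inf\{t\in[0,\tau_e): \min(S,I,R)\leq 1/k \text{ or } \max(S,I,R)\geq k\}$, with the usual convention $\inf\emptyset=\infty$. The sequence $\tau_k$ is nondecreasing, so it admits a limit $\tau_\infty\leq\tau_e$; it then suffices to prove $\tau_\infty=\infty$ a.s., since this simultaneously forces $\tau_e=\infty$ and keeps the solution in $\R^3_+$. I would argue by contradiction, assuming there exist $T>0$ and $\epsilon\in(0,1)$ with $\P(\tau_\infty\leq T)>\epsilon$.

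The core step is to construct a nonnegative $C^2$ Lyapunov function, the natural choice being
\begin{align*}
V(S,I,R)=\Big(S-1-\ln S\Big)+\Big(I-1-\ln I\Big)+\Big(R-1-\ln R\Big),
\end{align*}
which is nonnegative on $\R^3_+$ (since $x-1-\ln x\geq 0$) and blows up as any coordinate approaches $0$ or $\infty$. Applying the generalized Itô formula for jump-diffusions to $V$, I would compute $dV=\calL V\,dt + (\text{martingale terms from } dW_i) + (\text{compensated jump terms})$. The drift-plus-jump generator $\calL V$ collects the deterministic contributions together with the integral terms $\int_Z\big(\eta_i(u)-\ln(1+\eta_i(u))\big)\nu(du)$ arising from the jump part; here assumption \textbf{($A_2$)} is exactly what guarantees both that $1+\eta_i(u)>0$ (so $\ln(1+\eta_i(u))$ is well-defined and the positivity of the solution is preserved through jumps) and that these integrals are finite. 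The key estimate is to show $\calL V\leq C$ for some constant $C>0$ independent of the state, by bounding the linear and bilinear terms and absorbing the $-\beta SI$ type contributions.

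From $\calL V\leq C$, integrating from $0$ to $\tau_k\wedge T$ and taking expectations kills the martingale terms, yielding $\E\big[V(S(\tau_k\wedge T),I(\tau_k\wedge T),R(\tau_k\wedge T))\big]\leq V(S(0),I(0),R(0))+CT$. On the event $\{\tau_k\leq T\}$, at least one coordinate equals $1/k$ or $k$, so $V$ at the stopped state is at least $\min\{k-1-\ln k,\ 1/k-1+\ln k\}$, which diverges as $k\to\infty$. Combining this lower bound with the a priori bound and the assumption $\P(\tau_\infty\leq T)>\epsilon$ produces the inequality $V(S(0),I(0),R(0))+CT\geq \epsilon\cdot(\text{quantity}\to\infty)$, a contradiction as $k\to\infty$. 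Hence $\P(\tau_\infty\leq T)=0$ for every $T$, giving $\tau_\infty=\infty$ a.s. I expect the main obstacle to be the careful treatment of the compensated Poisson integral in Itô's formula: one must verify that the jump contribution to $\calL V$ is precisely $\sum_i\int_Z\big(\eta_i(u)-\ln(1+\eta_i(u))\big)\nu(du)$ (after the compensator cancels the linear part) and that this is bounded by \textbf{($A_2$)}, while also ensuring the local-martingale jump term is a genuine martingale on the stopped interval so that its expectation vanishes.
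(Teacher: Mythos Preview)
Your overall architecture is exactly the one the paper uses: local existence from \textbf{($A_1$)}, stopping times $\tau_k$, a $C^2$ Lyapunov function that blows up on the boundary of $\R^3_+$, Itô's formula with jumps, and the contradiction via $\E V \leq V(0)+CT$. The handling of the jump contribution and the role of \textbf{($A_2$)} are also identified correctly.

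However, there is a concrete gap in the bound $\calL V\leq C$: your unweighted choice
\[
V(S,I,R)=(S-1-\ln S)+(I-1-\ln I)+(R-1-\ln R)
\]
does \emph{not} give a generator bounded above by a constant unless $\beta\leq\mu_2$. Expanding the drift part of $\calL V$ one finds that the bilinear terms $\pm\beta SI$ cancel, but the linear $I$--contributions sum to
\[
\beta I-(\mu_2+\gamma)I+\gamma I=(\beta-\mu_2)\,I,
\]
and there is no remaining negative term in $I$ to absorb this. If $\beta>\mu_2$ (which the model does not exclude), $\calL V$ is unbounded above and the inequality $\E V(\tau_k\wedge T)\leq V(0)+CT$ fails, so the contradiction argument collapses.

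The fix, which is what the paper does, is to put a weight on the $S$--block:
\[
V(S,I,R)=\Big(S-m-m\ln\tfrac{S}{m}\Big)+(I-1-\ln I)+(R-1-\ln R),
\]
so that $\partial_S V=1-m/S$ and the dangerous term becomes $(m\beta-\mu_2)I$. Choosing $m=\mu_2/\beta$ (or any $0<m\leq\mu_2/\beta$) kills or makes nonpositive this term, after which $\calL V$ is indeed bounded by a constant $J_2$ depending only on the parameters and on the finite jump integrals from \textbf{($A_2$)}. With this single modification your argument goes through verbatim and coincides with the paper's proof.
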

\begin{proof}
 We prove that $\tau_{e}=\infty$ a.s. Let $\epsilon_0>0$ be sufficiently large, such that $S(0)$, $I(0)$, $R(0)$ lie within the interval $\big[\frac{1}{\epsilon_0},\epsilon_0\big]$. For each integer $\epsilon\geq \epsilon_0$, we define the following stopping time:
\begin{align*}
\tau_{\epsilon}&=\inf \left\lbrace t\in [0,\tau_{e}) :  \min\{S(t),I(t),R(t)\}\leq \frac{1}{\epsilon} \;\; \mbox{or}\;\; \max\{S(t),I(t),R(t)\}\geq\epsilon \right\rbrace.
\end{align*}
Evidently, $\tau_{\epsilon}$ is increasing as $\epsilon\to\infty$. Set $\tau_{\infty}=\underset{\epsilon\to\infty}{\lim}\tau_{\epsilon}$ whence $\tau_{\infty}\leq \tau_{e}$. If we can prove that $\tau_{\infty}=\infty$ a.s., then $\tau_{e}=\infty$ and the solution $(S(t),I(t),R(t))\in \R^{3}_{+}$ for all $t\geq 0$ almost surely. Specifically, we need to prove that $\tau_{\infty}=\infty$ a.s. Suppose the opposite, then there is a pair of positive constants $T>0$ and $k\in(0,1)$ such that $\P\{\tau_{\infty}\leq T\}>k$.
Hence, there is an integer $\epsilon_1\geq \epsilon_0$ such that
\begin{align}
\P\{\tau_{\epsilon}\leq T\}\geq k\hspace{0.3cm}\mbox{for all}\hspace{0.3cm} \epsilon\geq \epsilon_1.
\label{11}
\end{align}
Define a $C^2$-function $V:\R^3_+\to \R_+$ by
\begin{align*}
V(S,I,R)=\left(S-m-m\ln \frac{S}{m}\right)+(I-1-\ln I)+(R-1-\ln R),
\end{align*}
where $\alpha>0$ is a positive constant to be determined later. Obviously, this function is nonnegative which can be seen from $x-1-\ln x >0$ for $x> 0$. \\For $0\leq t\leq \tau_{\epsilon}\wedge T$, using Itô's formula, we obtain that
\begin{align*}
dV(S,I,R)&=LV(S,I,R)dt+\left(1-\frac{m}{S}\right)\sigma_1 S dW_1(t)+\left(1-\frac{1}{I}\right)\sigma_2 I dW_2(t)\\&\;\;\;+\left(1-\frac{1}{R}\right)\sigma_3 R dW_3(t)+\int_Z\begin{Bmatrix} \eta_1(u)S(t^{-})-m \ln(1+\eta_1(u))\\+\eta_2(u)I(t^{-})- \ln(1+\eta_2(u))\\+ \eta_3(u)R(t^{-})- \ln(1+\eta_3(u))&\end{Bmatrix}\widetilde{N}(dt,du),
\end{align*}
where,
\begin{align*}
LV(S,I,R)&=A-\mu_1 S-\frac{m A}{S}+m \beta I+m\mu_1-(\mu_2+\gamma)I-\beta S +(\mu_2+\gamma)+\gamma I-\mu_1 R\\&\;\;\;-\gamma\frac{I}{R}+\mu_1+\frac{m\sigma_1^2}{2}+\frac{\sigma_2^2}{2}+\frac{\sigma_3^2}{2}+\int_Z\begin{Bmatrix} m\eta_1(u)- m\ln(1+\eta_1(u))\\+\eta_2(u)- \ln(1+\eta_2(u))\\+ \eta_3(u)- \ln(1+\eta_3(u))&  \end{Bmatrix}\nu(du).
\end{align*}
Then
\begin{align*}
LV(S,I,R)&\leq A-\mu_2I+m\beta I+\mu_1+m\mu_1+\mu_2+\gamma+\frac{m\sigma_1^2}{2}+\frac{\sigma_2^2}{2}+\frac{\sigma_3^2}{2}\\&\;\;\;+\int_Z\begin{Bmatrix} m\eta_1(u)- m\ln(1+\eta_1(u))\\+\eta_2(u)- \ln(1+\eta_2(u))\\+ \eta_3(u)- \ln(1+\eta_3(u))&\end{Bmatrix}\nu(du).
\end{align*}
Given the fact that $x-\ln(1+x)\geq0$ for all $x> 1$ and the hypothesis $(A_2)$, we define
\begin{align*}
J_1&=\int_Z\begin{Bmatrix} m\eta_1(u)- m\ln(1+\eta_1(u))\\+\eta_2(u)- \ln(1+\eta_2(u))\\+ \eta_3(u)- \ln(1+\eta_3(u))&\end{Bmatrix}\nu(du).
\end{align*}
To simplify, we choose $m=\frac{\mu_2}{\beta}$, then we obtain
\begin{align*}
LV(S,I,R)\leq A-\mu_2I+m\beta I+\mu_1+m\mu_1+\mu_2+\gamma+\frac{m\sigma_1^2}{2}+\frac{\sigma_2^2}{2}+\frac{\sigma_3^2}{2}+J_1
\equiv J_2.
\end{align*}
Therefore,
\begin{align*}
\int^{\tau_{\epsilon}\wedge T}_0dV(S(t),I(t),R(t))&\leq \int^{\tau_{\epsilon}\wedge T}_0 J_2 dt+\int^{\tau_{\epsilon}\wedge T}_0\int_Z\begin{Bmatrix} \eta_1(u)S(t^{-})-m \ln(1+\eta_1(u))\\+\eta_2(u)I(t^{-})- \ln(1+\eta_2(u))\\+ \eta_3(u)R(t^{-})- \ln(1+\eta_3(u))&\end{Bmatrix}\widetilde{N}(dt,du).
\end{align*}
Taking expectation yields
\begin{align*}
\E V(S(\tau_\epsilon \wedge T), I(\tau_\epsilon \wedge T), R(\tau_\epsilon \wedge T))\leq V(S(0), I(0), R(0))+J_2 T.
\end{align*}
Setting $\Omega_\epsilon=\{\tau_\epsilon\leq T\}$ for $\epsilon\geq \epsilon_0$ and by (\ref{11}), $\P(\Omega_\epsilon)\geq k$. For $\omega\in\Omega_\epsilon$, there is some component of $S(\tau_\epsilon)$, $I(\tau_\epsilon)$ and $R(\tau_\epsilon)$ equals either $\epsilon$ or $\frac{1}{\epsilon}$. Hence, $V(S(\tau_\epsilon),I(\tau_\epsilon),R(\tau_\epsilon))$ is not less than $\epsilon-1-\ln\epsilon$ or $\frac{1}{\epsilon}-1-\ln\frac{1}{\epsilon}$. That is 
\begin{align*}
V(S(\tau_\epsilon ), I(\tau_\epsilon ),R(\tau_\epsilon ))\geq (\epsilon-1-\ln \epsilon)\wedge\left(\frac{1}{\epsilon}-1-\ln\frac{1}{\epsilon}\right).
\end{align*}
Consequently,
\begin{align*}
V(S(0), I(0), R(0))+J_2 T&\geq \E ( \mathbf{1}_{\Omega_\epsilon} V(S(\tau_\epsilon ,\omega), I(\tau_\epsilon ,\omega),R(\tau_\epsilon ,\omega)))\\
&\geq k \left((\epsilon-1-\ln \epsilon)\wedge\left(\frac{1}{\epsilon}-1-\ln\frac{1}{\epsilon}\right)\right).
\end{align*}
Extending $\epsilon$ to $\infty$ leads to the contradiction. Thus, $\tau_{\infty}= \infty$ a.s. which completes the proof of the theorem.
\end{proof}
\section{Threshold analysis of the model (\ref{s2})} \label{sec2}
The aim of the following theorem is to determine the threshold for the SDE model (\ref{s2}). 
\begin{thm}
The parameter $\mathcal{T}^s_0$ is the threshold of the stochastic model (\ref{s2}). That is to say that:
\begin{enumerate}
\item If $\mathcal{T}^s_0>1$, then the stochastic system (\ref{s2}) admits a unique stationary distribution and it has the ergodic property for any initial value $(S(0),I(0),R(0))\in\R^3_+$. 
\item If $\mathcal{T}^s_0<1$, then the epidemic dies out exponentially with probability one.
\end{enumerate}
\label{thm1}
\end{thm}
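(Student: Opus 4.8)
The plan is to treat both regimes through a single logarithmic transformation of the infected compartment, combined with a pathwise comparison controlling the susceptible compartment. Applying Itô's formula for jump-diffusions to $\ln I(t)$ and dividing by $t$ gives
\begin{align*}
\frac{\ln I(t)-\ln I(0)}{t}=\frac{\beta}{t}\int_0^t S(s)\,ds-\Big((\mu_2+\gamma)+\frac{\sigma_2^2}{2}+\int_Z\big(\eta_2(u)-\ln(1+\eta_2(u))\big)\nu(du)\Big)+\frac{M(t)}{t},
\end{align*}
where $M(t)=\sigma_2 W_2(t)+\int_0^t\int_Z\ln(1+\eta_2(u))\widetilde{N}(ds,du)$ is a local martingale whose jump part has angle bracket controlled by $\kappa$ through $(A_3)$; the strong law of large numbers for local martingales then yields $M(t)/t\to0$ a.s. The bracketed constant equals $\frac{\beta A}{\mu_1}-(\mu_2+\gamma)\mathcal{T}^s_0$, so everything reduces to estimating the occupation average $\frac1t\int_0^t S(s)\,ds$.

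For the extinction claim ($\mathcal{T}^s_0<1$) I would dominate $S$ from above. Dropping the nonnegative term $-\beta S I$ in the drift of the $S$-equation and invoking the stochastic comparison theorem gives $S(t)\le\Phi(t)$ a.s., where $\Phi$ solves the linear equation $d\Phi=(A-\mu_1\Phi)dt+\sigma_1\Phi\,dW_1+\int_Z\eta_1(u)\Phi(t^-)\widetilde{N}(dt,du)$ with $\Phi(0)=S(0)$. Since $\Phi$ is positive recurrent with time-average $A/\mu_1$, one obtains $\limsup_{t\to\infty}\frac1t\int_0^t S\,ds\le A/\mu_1$ a.s. Substituting into the identity above produces $\limsup_{t\to\infty}\frac{\ln I(t)}{t}\le(\mu_2+\gamma)(\mathcal{T}^s_0-1)<0$, i.e. $I(t)\to0$ exponentially a.s.; the relaxation of $S$ to its disease-free regime and $R(t)\to0$ then follow from the two remaining equations.

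For the ergodic regime ($\mathcal{T}^s_0>1$) I would establish existence of an invariant law via the mutually exclusive possibilities lemma rather than a Khasminskii Lyapunov function. First I would verify that $(S,I,R)$ is a Feller process (continuity in the initial datum together with the $p$-th moment bound secured by $(A_4)$). The lemma then forces a dichotomy: either an invariant probability measure exists, or the Cesàro-averaged transition probability of every compact set vanishes. To exclude the second alternative I would show boundedness in probability and non-accumulation on $\partial\R^3_+$: the moment estimate prevents escape to infinity, the recruitment term $A$ keeps $S$ away from $0$, and the crucial persistence of $I$ comes from eliminating $\frac1t\int_0^t S\,ds$ through the integrated $S$-equation to obtain
\begin{align*}
\frac{\ln I(t)}{t}=(\mu_2+\gamma)(\mathcal{T}^s_0-1)-\frac{\beta^2}{\mu_1 t}\int_0^t S(s)I(s)\,ds+\frac{(\text{boundary and martingale terms})}{t}.
\end{align*}
Since $I$ cannot grow exponentially, the left side has nonpositive $\limsup$, which forces $\liminf_{t\to\infty}\frac1t\int_0^t I(s)\,ds>0$; as $R$ is driven by $\gamma I$, the trajectory then occupies a compact subset of the interior a positive fraction of the time, contradicting the second alternative. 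Uniqueness and ergodicity of the resulting stationary distribution I would derive from asymptotic stability in distribution, again using the comparison theorem to couple solutions started from different initial values.

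The main obstacle is this persistence step: controlling the cross term $\frac1t\int_0^t S(s)I(s)\,ds$ requires the comparison bound $S\le\Phi$ together with the $p$-th moment control from $(A_4)$ to keep the product integrable, and the vanishing of all jump-diffusion martingale averages must be justified through the strong law for local martingales using $(A_3)$. Converting the positive lower bound on the occupation time of $I$ into a genuine positive-recurrence statement in a compact set of $\R^3_+$ — the precise input the mutually exclusive possibilities lemma needs — is the delicate point, since the boundary faces $S=0$, $I=0$ and $R=0$ must all be handled simultaneously.
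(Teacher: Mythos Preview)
Your proposal follows essentially the same architecture as the paper: It\^o's formula on $\ln I$, stochastic comparison with the linear auxiliary process $\psi$ (your $\Phi$) for extinction, and the Feller property combined with the mutually exclusive possibilities lemma for ergodicity, with the persistence estimate obtained by eliminating $\int_0^t S\,ds$ to expose the cross term $\int_0^t S(s)I(s)\,ds$.

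Two points are worth sharpening. First, after the elimination the quantity whose Ces\`aro average is forced positive is $\int_0^t S(s)I(s)\,ds$, not $\int_0^t I(s)\,ds$ as you write; the paper keeps the product throughout and never passes to $\int I$. Second, the step you correctly flag as delicate---turning $\liminf_{t\to\infty}\frac1t\int_0^t\beta S(s)I(s)\,ds>0$ into a positive lower bound on the occupation probability of a compact set---is carried out in the paper by an explicit splitting rather than by a separate treatment of the boundary faces. One writes
\[
\E\big[\beta SI\,\mathbf{1}_{\{S\ge\epsilon,\,I\ge\epsilon\}}\big]\ \ge\ \E[\beta SI]-\E\big[\beta SI\,\mathbf{1}_{\{S<\epsilon\}}\big]-\E\big[\beta SI\,\mathbf{1}_{\{I<\epsilon\}}\big],
\]
bounds the subtracted terms by $\beta\epsilon$ times a first moment controlled via Lemma~\ref{L1}, and then applies Young's inequality $xy\le p^{-1}(\eta x)^p+q^{-1}\eta^{-q}y^q$ with $x=\beta SI$ and $y=\mathbf{1}_{\{S\ge\epsilon,\,I\ge\epsilon\}}$ together with the $2p$-th moment bound from Lemma~\ref{L1} to extract a positive lower bound on $\frac1t\int_0^t\E[\mathbf{1}_{\{S\ge\epsilon,\,I\ge\epsilon\}}]\,ds$; a final Chebyshev step caps $S$ and $I$ from above by a large $\zeta$. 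This is exactly the mechanism you were missing, and it uses assumption $(A_4)$ in the precise way you anticipated. Note also that the paper justifies $\limsup_{t\to\infty}\frac1t\ln I(t)\le 0$ not by an informal ``$I$ cannot grow exponentially'' but by applying Lemma~\ref{nlem} to $h(t)=1/(S(t)+I(t))$, which is a self-contained deterministic device; you should plan to invoke either this or the sublinear growth statement from Lemma~\ref{lem1m}.
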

Before proving the main theorem, we prepare five useful Lemmas. Consider the following subsystem
\begin{align}
\begin{cases}d\psi(t)=(A-\mu_1\psi(t))dt+\sigma_1\psi(t)dW_1(t)+\int_Z\eta_1(u)\psi(t^{-})\tilde{\mathcal{N}}(dt,du) \hspace{0.5cm}\forall t>0\\
\psi(0)=S(0)>0.\end{cases}
\label{s4}
\end{align}
\begin{lem}\cite{166}
Let $(S(t),I(t),R(t))$ be the positive solution of the system (\ref{s2}) with any given initial condition $(S(0),I(0),R(0))\in\R^3_+$. Let also $\psi(t)\in\R_+$ be the solution of the equation (\ref{s4}) with any given initial value $\psi(0)=S(0)\in\R_+$. Then
\begin{enumerate}
\item 
\begin{align*}
\underset{t\to\infty}{\lim}\frac{\psi(t)}{t}=0, \hspace{0.3cm}\underset{t\to\infty}{\lim}\frac{S(t)}{t}=0,    \hspace{0.2cm}\mbox{and}\hspace{0.2cm}  \underset{t\to\infty}{\lim}\frac{I(t)}{t}=0 \hspace{0.5cm}\mbox{a.s.}
\end{align*}
\item \begin{align*}
&\underset{t\to\infty}{\lim}\frac{\int^t_0\int_Z \eta_1(u)\psi(s^{-})\widetilde{N}(ds,du)}{t}=0,\\
&\underset{t\to\infty}{\lim}\frac{\int^t_0\int_Z \eta_1(u)S(s^{-})\widetilde{N}(ds,du)}{t}=0,\\
&\underset{t\to\infty}{\lim}\frac{\int^t_0\int_Z \eta_2(u)I(s^{-})\widetilde{N}(ds,du)}{t}=0\hspace{0.1cm}\mbox{a.s.}
\end{align*}
\end{enumerate}
\label{lem1m}
\end{lem}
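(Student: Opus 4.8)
The plan is to reduce the whole statement to a single uniform moment estimate, after which both parts follow from standard pathwise (Borel–Cantelli) and martingale limit arguments. First I would control the total mass: writing $N=S+I+R$ and summing the three equations of (\ref{s2}), the interaction term $\beta SI$ cancels and, since $\mu_2\ge\mu_1$, the drift is dominated by $A-\mu_1 N$, so $dN\le(A-\mu_1N)\,dt+\sum_i\sigma_i(\cdot)\,dW_i+\int_Z(\cdots)\widetilde N(dt,du)$. The auxiliary process $\psi$ obeys an equation of exactly the same affine type; and because $S$ solves the same system with the extra nonpositive drift $-\beta SI$ while being driven by the same $W_1$ and $\widetilde N$, the stochastic comparison theorem gives $0<S(t)\le\psi(t)$ a.s. Hence it suffices to bound moments of $\psi$ and of $N$, since also $I(t)\le N(t)$.

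The main step is a uniform higher moment bound. I would fix the exponent $2p$ from assumption $(A_4)$, apply Itô's formula to $\psi^{2p}$ (resp. $N^{2p}$), and take expectations. The jump contribution produces exactly an integral of the form $\int_Z\big((1+\eta_1(u))^{2p}-1-2p\eta_1(u)\big)\nu(du)$, which is finite because $\eta_1$ is bounded and $\int_Z\eta_1^2\,d\nu<\infty$ (the latter from $(A_1)$, $F_i$ being linear in $x$); assumption $(A_4)$ is precisely what makes the resulting coefficient $\chi_2$ positive. This yields a differential inequality $\frac{d}{dt}\E\psi^{2p}(t)\le C_1-C_2\,\E\psi^{2p}(t)$ with $C_2>0$, so Grönwall gives $\limsup_{t\to\infty}\E\psi^{2p}(t)\le C_1/C_2<\infty$, and likewise for $N$. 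I expect this moment estimate — pinning down the jump term with the correct sign via $\ell$ and $\chi_2$ — to be the only genuinely delicate point; everything afterwards is routine.

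For Part 1, the pathwise bound $\lim_t\psi(t)/t=0$ follows by a Borel–Cantelli argument on unit intervals: combining Doob's maximal inequality on $[n,n+1]$ with the moment bound and the Burkholder–Davis–Gundy inequality, one makes $\P\big(\sup_{n\le t\le n+1}\psi^{2p}(t)>(n+1)^{1+\epsilon}\big)$ summable in $n$, whence for a.e. $\omega$ one has $\psi(t)\le t^{(1+\epsilon)/(2p)}$ for all large $t$; choosing $p$ so that $(1+\epsilon)/(2p)<1$ gives $\psi(t)/t\to0$. The estimates $S(t)/t\to0$ and $I(t)/t\to0$ then follow immediately from $S\le\psi$ and $I\le N$.

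For Part 2, each of the three quantities is a purely discontinuous local martingale $M_X(t)=\int_0^t\int_Z\eta_i(u)X(s^-)\widetilde N(ds,du)$ with $X\in\{\psi,S,I\}$, whose predictable quadratic variation is $\langle M_X\rangle_t=\big(\int_Z\eta_i^2\,d\nu\big)\int_0^tX^2(s)\,ds$. By the moment bound, $\E X^2(s)$ is bounded, so $\E\langle M_X\rangle_t\le C t$ grows at most linearly; the strong law of large numbers for local martingales (again obtained via BDG and Borel–Cantelli on unit intervals, using boundedness of $\eta_i$ to ensure $\int_Z|\eta_i|^{q}\,d\nu<\infty$ for the exponent $q$ needed) then yields $M_X(t)/t\to0$ a.s. This closes the proof.
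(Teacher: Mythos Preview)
The paper does not actually prove this lemma: note the \verb|\cite{166}| immediately after \verb|\begin{lem}|. Lemma~\ref{lem1m} is quoted from an external reference and no argument is given in the present paper, so there is nothing to compare your proof against on the paper's side.

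That said, your outline is the standard route used for such results in this literature and is essentially correct. Two places deserve a bit more care. First, in the Borel--Cantelli step you write ``choosing $p$ so that $(1+\epsilon)/(2p)<1$''; but $p$ is the exponent fixed by assumption~$(A_4)$, not a free parameter. You need to argue (by continuity of $\chi_2$ in $p$, say) that $(A_4)$ then holds for some $p>1/2$, or equivalently pick $\epsilon$ small once $2p>1$ is secured. Second, for Part~2 the passage from the expectation bound $\E\langle M_X\rangle_t\le Ct$ to the pathwise conclusion $M_X(t)/t\to 0$ a.s.\ is not immediate: the strong law for local martingales requires pathwise control of $\langle M_X\rangle_t$, not merely of its mean. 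The usual fix is to feed the result of Part~1 back in --- from $X(t)=O(t^{1/(2p)})$ a.s.\ one gets $\langle M_X\rangle_t=c\int_0^t X^2(s)\,ds=O(t^{1+1/p})$ a.s., and then the martingale SLLN applies provided $1+1/p<2$, i.e.\ $p>1$. So the argument as written implicitly needs $p>1$ in $(A_4)$; if only $p\ge 1/2$ is available one has to work harder (e.g.\ via the exponential martingale inequality for jump processes). These are refinements rather than gaps in the strategy; the skeleton you propose is exactly what reference~[166] carries out.
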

\begin{lem}
Let $\psi(t)$ be the solution of the system (\ref{s4}) with an initial value $\psi(0)\in\R_{+}$. Then, 
\begin{align*}
\underset{t\to\infty}{\lim}\frac{1}{t}\int^t_0\psi(s)ds=\frac{A}{\mu_1}\hspace{0.2cm}\mbox{a.s.}
\end{align*} 
%and
%\begin{align*}
%\underset{t\to\infty}{\lim}\frac{1}{t}\int^t_0\psi^2(s)ds=\frac{2A^2}{\mu_1\chi_3}\hspace{0.2cm}\mbox{a.s.}
%\end{align*}
%where $\chi_3=2\mu_1-\sigma_1^2-\int_Z\Big((1+\eta(u))^2-1-\eta(u)\Big)\nu(du)>0$.
\label{lemmas}
\end{lem}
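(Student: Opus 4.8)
The plan is to convert the claimed convergence of a time average into an algebraic identity obtained by integrating the linear subsystem (\ref{s4}) directly. Integrating (\ref{s4}) over $[0,t]$, solving for $\int_0^t\psi(s)\,ds$, and dividing through by $\mu_1 t$, I would obtain
\begin{align*}
\frac{1}{t}\int_0^t\psi(s)\,ds &=\frac{A}{\mu_1}+\frac{\psi(0)-\psi(t)}{\mu_1 t}+\frac{\sigma_1}{\mu_1 t}\int_0^t\psi(s)\,dW_1(s)\\
&\quad+\frac{1}{\mu_1 t}\int_0^t\int_Z\eta_1(u)\psi(s^-)\widetilde{N}(ds,du).
\end{align*}
It then suffices to prove that, almost surely, each of the three remaining terms on the right-hand side tends to $0$ as $t\to\infty$, so that the limit equals $A/\mu_1$.

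Two of these terms are immediate from the previous lemma. The boundary term $\frac{\psi(0)-\psi(t)}{\mu_1 t}$ vanishes because $\psi(0)/t\to 0$ trivially while $\psi(t)/t\to 0$ almost surely by Lemma \ref{lem1m}(1). The compensated Poisson integral $\frac{1}{\mu_1 t}\int_0^t\int_Z\eta_1(u)\psi(s^-)\widetilde{N}(ds,du)$ tends to $0$ almost surely directly by the first statement of Lemma \ref{lem1m}(2). Thus the whole argument reduces to controlling the Brownian stochastic integral.

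For that term I would set $M(t)=\int_0^t\psi(s)\,dW_1(s)$, a continuous local martingale with $M(0)=0$ and quadratic variation $\langle M\rangle_t=\int_0^t\psi^2(s)\,ds$, and invoke the strong law of large numbers for local martingales in the form: if $\limsup_{t\to\infty}\langle M\rangle_t/t<\infty$ almost surely, then $M(t)/t\to 0$ almost surely. I expect the verification of this linear-growth bound on $\langle M\rangle_t$ to be the main obstacle, since it amounts to an almost-sure bound on the time average of $\psi^2$. The route I would take is a uniform second-moment estimate: applying It\^o's formula to $\psi^2$ and taking expectations yields a scalar linear differential inequality for $\E[\psi^2(t)]$ whose coefficient is negative under the dissipativity built into the standing assumptions (in particular $(A_4)$, which forces the damping $\mu_1$ to dominate the combined diffusion and jump intensities), giving $\sup_{t\ge 0}\E[\psi^2(t)]<\infty$; feeding this back into the integrated-equation scheme applied to $\psi^2$, or a direct Borel--Cantelli estimate over integer times using the exponential martingale inequality, then delivers the required almost-sure bound on $\frac1t\int_0^t\psi^2(s)\,ds$. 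Once $M(t)/t\to 0$ is established, collecting the four limits gives $\frac1t\int_0^t\psi(s)\,ds\to A/\mu_1$ almost surely, which is the assertion.
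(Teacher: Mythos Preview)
Your proposal is correct and follows essentially the same approach as the paper: integrate (\ref{s4}), solve for the time average, and then use Lemma~\ref{lem1m} together with the strong law of large numbers for martingales to show that the three remainder terms vanish. The paper's own proof is in fact terser than yours---it simply invokes ``the large number theorem for martingales'' for the Brownian integral without explicitly checking the growth of $\langle M\rangle_t$---so your added justification via a second-moment bound is, if anything, more complete than what the paper provides.
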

\begin{proof}
Integrating from $0$ to $t$ on both sides of (\ref{s4}) yields
\begin{align*}
\frac{\psi(t)-\psi(0)}{t}=A-\frac{\mu_1}{t}\int^t_0\psi(s)ds+\frac{\sigma_1}{t}\int_0^t \psi(s)dW_1(s)+\frac{1}{t}\int^t_0\int_Z \eta_1(u)\psi(s^{-})\widetilde{N}(ds,du).
\end{align*}
Clearly, we can derive that
\begin{align*}
\frac{1}{t}\int^t_0\psi(s)ds=\frac{A}{\mu_1}+\frac{\sigma_1}{\mu_1 t}\int_0^t \psi(s^{-})dW_1(s)+\frac{1}{\mu_1 t}\int^t_0\int_Z \eta_1(u)\psi(s^{-})\widetilde{N}(ds,du).
\end{align*}
According to lemma \ref{lem1m} and the large number theorem for martingales, we can easily verify that
\begin{align*}
\underset{t\to\infty}{\lim}\frac{1}{t}\int^t_0\psi(s)ds=\frac{A}{\mu_1}\hspace{0.2cm}\mbox{a.s.}
\end{align*}
%Applying the generalized Itô’s formula to model (\ref{s4}) leads to
%\begin{align*}
%d\psi^2(t)&=\bigg(2\psi(t)\Big(A-\mu_1 \psi(t)\Big)+\sigma_1^2\psi^2(t)+\int_Z\psi^2(t)\Big((1+\eta(u))^2-1-\eta(u)\Big)\nu(du)\bigg)dt\\&\;\;\;+2\sigma_1 \psi^2(t^{-})dW_1(t)+\int_Z \psi^2(t^{-})\Big((1+\eta(u))^2-\eta(u)\Big)\widetilde{N}(dt,du).
%\end{align*}
%Integrating both sides from $0$ to $t$, yields
%\begin{align*}
%\psi^2(t)-\psi^2(0)&=2A\int^t_0\psi(s)ds-\bigg(2\mu_1-\sigma_1^2-\int_Z\Big((1+\eta(u))^2-1-\eta(u)\Big)\nu(du)\bigg)\int^t_0\psi^2(s)ds\\&\;\;\;+2\sigma_2\int^t_0\psi^2(s)dW_1(s)+\int^t_0\int_Z\psi^2(s)\Big((1+\eta(u))^2-\eta(u)\Big)\widetilde{N}(ds,du).
%\end{align*}
%Let $\chi_3=2\mu_1-\sigma_1^2-\int_Z\Big((1+\eta(u))^2-1-\eta(u)\Big)\nu(du)>0$. Therefore
%\begin{align*}
%\frac{1}{t}\int^t_0\psi^2(s)ds &=\frac{2A^2}{\mu_1\chi_3}+\frac{2\sigma_1(\psi^2(0)-\psi^2(t))}{\chi_3t}+\frac{2\sigma_1}{\chi_3 t}\int^t_0\psi^2(s)sW_1(s)\\&\;\;\;+\frac{2\sigma_1}{\chi_3 t}\int^t_0\int_Z\psi^2(s)\Big((1+\eta(u))^2-\eta(u)\Big)\widetilde{N}(ds,du),
%\end{align*}
%By using the same method as that in \cite{166}, assumption \textbf{($A_4$)}  and the large number theorem for martingales, we can easily verify that
%\begin{align*}
%\underset{t\to\infty}{\lim}\frac{1}{t}\int^t_0X^2(s)ds=\frac{2A^2}{\mu_1\chi_3}\hspace{0.2cm}\mbox{a.s.}
%\end{align*}
\end{proof}

\begin{lem}
Let $(S(t),I(t),R(t))$ be the solution of (\ref{s2}) with initial value $(S(0),I(0),R(0))\in\R^3_+$. Then 
\begin{enumerate}
\item $\E\big((S(t)+I(t))^{2p}(t)\big)\leq (S(0)+I(0))^{2p}e^{\{-p\chi_2t\}}+ \frac{2\chi_1}{\chi_2}$;
\item $\underset{t\to +\infty}{\lim \sup} \frac{1}{t}\int^t_0\E\big((S(s)+I(s))^{2p}\big)ds\leq \frac{2\chi_1}{\chi_2}$\;\; a.s.
\end{enumerate}
where $\chi_1=\underset{x>0}{\sup}\{Ax^{{2p}-1}-\frac{ \chi_2}{2}x^{2p}\}$.
\label{L1}
\end{lem}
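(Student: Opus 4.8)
The plan is to reduce everything to a one-dimensional analysis of the aggregate process $X(t):=S(t)+I(t)$. Adding the first two equations of (\ref{s2}), the bilinear incidence terms $\pm\beta S I$ cancel, so that
\[
dX=\big(A-\mu_1 S-(\mu_2+\gamma)I\big)dt+\sigma_1 S\,dW_1+\sigma_2 I\,dW_2+\int_Z\big(\eta_1(u)S(t^-)+\eta_2(u)I(t^-)\big)\widetilde N(dt,du).
\]
I would then apply the Itô formula for jump--diffusions to $V(X)=X^{2p}$ and estimate the associated generator $LV$, the whole point being to produce a dissipative bound of the form $LV\le 2p\chi_1-p\chi_2 X^{2p}$.

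For the generator estimate I would treat the three contributions separately. For the drift I use $\mu_2+\gamma\ge\mu_1$ (since $\mu_2=\mu_1+\alpha\ge\mu_1$ and $\gamma\ge0$) to get $2pX^{2p-1}\big(A-\mu_1 S-(\mu_2+\gamma)I\big)\le 2pAX^{2p-1}-2p\mu_1 X^{2p}$. For the diffusion part, the elementary inequality $\sigma_1^2S^2+\sigma_2^2I^2\le\max\{\sigma_1^2,\sigma_2^2\}(S+I)^2=\max\{\sigma_1^2,\sigma_2^2\}X^2$ turns $p(2p-1)X^{2p-2}(\sigma_1^2S^2+\sigma_2^2I^2)$ into $p(2p-1)\max\{\sigma_1^2,\sigma_2^2\}X^{2p}$. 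The delicate term is the compensated jump integrand $(X+\eta_1 S+\eta_2 I)^{2p}-X^{2p}-2pX^{2p-1}(\eta_1 S+\eta_2 I)$: here I use $1+\eta_i>0$ from $(A_2)$ so that $X+\eta_1 S+\eta_2 I=(1+\eta_1)S+(1+\eta_2)I>0$, together with the two--sided bound $(\eta_1\wedge\eta_2)X\le\eta_1 S+\eta_2 I\le(\eta_1\vee\eta_2)X$, to dominate the integrand by $\big((1+\eta_1\vee\eta_2)^{2p}-1-(\eta_1\wedge\eta_2)\big)X^{2p}$, whose $\nu$-integral is exactly $\ell$. Collecting the three pieces and recalling the definition of $\chi_2$ in $(A_4)$ gives $LV\le 2pAX^{2p-1}-2p\chi_2 X^{2p}$. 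Splitting $2p\chi_2 X^{2p}=p\chi_2 X^{2p}+p\chi_2 X^{2p}$ and invoking $\chi_1=\sup_{x>0}\{Ax^{2p-1}-\tfrac{\chi_2}{2}x^{2p}\}$ yields $2pAX^{2p-1}-p\chi_2 X^{2p}\le 2p\chi_1$, hence the claimed $LV\le 2p\chi_1-p\chi_2 X^{2p}$.

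From here the two assertions follow by standard ODE-comparison arguments. Applying Dynkin's formula up to $t\wedge\tau_\epsilon$ (the stopping time of Theorem \ref{thmp}), the local-martingale contributions from $dW_i$ and $\widetilde N$ vanish in expectation, and after letting $\epsilon\to\infty$ via Fatou's lemma I obtain $\frac{d}{dt}\E V(X(t))\le 2p\chi_1-p\chi_2\,\E V(X(t))$. Gronwall's inequality then gives $\E(S(t)+I(t))^{2p}\le(S(0)+I(0))^{2p}e^{-p\chi_2 t}+\frac{2\chi_1}{\chi_2}\big(1-e^{-p\chi_2 t}\big)$, which implies item 1. For item 2, integrating the same differential inequality over $[0,t]$ gives $p\chi_2\int_0^t\E V(X(s))ds\le 2p\chi_1 t+V(X(0))-\E V(X(t))\le 2p\chi_1 t+(S(0)+I(0))^{2p}$; dividing by $p\chi_2 t$ and taking $\limsup_{t\to\infty}$ eliminates the initial-value term and leaves $\limsup_{t\to\infty}\frac1t\int_0^t\E(S(s)+I(s))^{2p}ds\le\frac{2\chi_1}{\chi_2}$.

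The main obstacle I anticipate is the jump integrand estimate: one must simultaneously exploit the positivity $1+\eta_i>0$ and the $\vee/\wedge$ sandwich to collapse the two-dimensional quantity $\eta_1 S+\eta_2 I$ onto multiples of $X$, so that the finiteness assumption on $\ell$ in $(A_4)$ can be used. A secondary technical point is that the whole scheme is meaningful only when $\chi_2>0$: this is what makes $\chi_1$ finite (the map $x\mapsto Ax^{2p-1}-\tfrac{\chi_2}{2}x^{2p}$ tends to $-\infty$ as $x\to\infty$ and is bounded near $0$ because $2p-1\ge0$), guarantees the exponential decay, and ensures $\tfrac{2\chi_1}{\chi_2}$ is positive; I would state this positivity explicitly as a standing hypothesis. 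The localization/Fatou step legitimising ``take expectations, martingales vanish'' is routine but should not be skipped, since a priori $\E V(X(t))$ is only known to be finite after this argument.
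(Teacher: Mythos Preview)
Your proposal is correct and follows essentially the same route as the paper: both derive the Lyapunov estimate $L(S+I)^{2p}\le 2p\chi_1-p\chi_2(S+I)^{2p}$ via the identical drift/diffusion/jump bounds (using $\mu_2+\gamma\ge\mu_1$, $\sigma_1^2S^2+\sigma_2^2I^2\le\max\{\sigma_1^2,\sigma_2^2\}X^2$, and the $\vee/\wedge$ sandwich on the jump increment). The only cosmetic difference is that the paper then applies It\^o's product rule to $e^{p\chi_2 t}(S+I)^{2p}$ and integrates, whereas you pass to expectations first and invoke Gronwall---these are equivalent, and your explicit localization/Fatou step is in fact more careful than the paper's ``take expectations'' line.
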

\begin{proof}
Making use of Itô's lemma, we obtain
\begin{align*}
d(S(t)+I(t))^{2p}&\leq 2p[S(t)+I(t)]^{2p-1}\big(A-\mu_1 S(t)-(\mu_1+\alpha+\gamma)I(t)\big)dt\\&\;\;\;+p(2p-1)[S(t)+I(t)]^{2p-2}(\sigma_1^2S^2(t)+\sigma_2^2I^2(t))dt\\&\;\;\;+2p[S(t)+I(t)]^{2p-1}(\sigma_1S(t)dW_1(t)+\sigma_2I(t)dW_2(t))\\&\;\;\;+\int_Z[S(t)+I(t)]^{2p}\big((1+\eta_1(u)\vee \eta_2(u))^{2p}-1-\eta_1(u)\wedge\eta_2(u)\big)\nu(du)dt\\&\;\;\;+\int_Z[S(t)+I(t)]^{2p}\big((1+\eta_1(u)\vee \eta_2(u))^{2p}-\eta_1(u)\wedge\eta_2(u)\big)\tilde{\mathcal{N}}(dt,du).
\end{align*}
Then
\begin{align*}
&\leq 2p[S(t)+I(t)]^{2p-2}\Big\{A[S(t)+I(t)]-\Big(\mu_1-\frac{(2p-1)}{2}\max\{\sigma_1^2,\sigma_2^2\}\\&\;\;\;-\frac{1}{2p}\int_Z\big((1+\eta_1(u)\vee \eta_2(u))^{2p}-1-\eta_1(u)\wedge\eta_2(u)\big)\nu(du)\Big)[S(t)+I(t)]^{2}\Big\}\\&\;\;\;+2p[S(t)+I(t)]^{2p-1}(\sigma_1S(t)dW_1(t)+\sigma_2I(t)dW_2(t))\\&\;\;\;+\int_Z[S(t)+I(t)]^{2p}\big((1+\eta_1(u)\vee \eta_2(u))^{2p}-\eta_1(u)\wedge\eta_2(u)\big)\tilde{\mathcal{N}}(dt,du).
\end{align*}
We choose neatly $p\geq\frac{1}{2}$ such that
\begin{align*}
\chi_2&=\mu_1-\frac{(2p-1)}{2}\max\{\sigma_1^2,\sigma_2^2\}-\frac{1}{2p}\int_Z\big((1+\eta_1(u)\vee \eta_2(u))^{2p}-1-\eta_1(u)\wedge\eta_2(u)\big)\nu(du)> 0.
\end{align*}
Hence 
\begin{align*}
d(S(t)+I(t))^{2p}&\leq 2p[S(t)+I(t)]^{2p-2}\Big\{\chi_1-\frac{\chi_2}{2}[S(t)+I(t)]^{2p}\Big\}dt\\&\;\;\;
+2p[S(t)+I(t)]^{2p-1}(\sigma_1S(t)dW_1(t)+\sigma_2I(t)dW_2(t))\\&\;\;\;+\int_Z[S(t)+I(t)]^{2p}\big((1+\eta_1(u)\vee \eta_2(u))^{2p}-\eta_1(u)\wedge\eta_2(u)\big)\tilde{\mathcal{N}}(dt,du).
\end{align*}
On the other hand, we have
\begin{align*}
d(S(t)+I(t))^{2p}e^{p\chi_2t}&=p\chi_2[S(t)+I(t)]^{2p}e^{p\chi_2t}+e^{p\chi_2t}d(S(t)+I(t))^{2p}\\&\leq 2p\chi_1e^{p\chi_2t}+e^{p\chi_2t}2p(S(t)+I(t))^{2p-1}(\sigma_1S(t)dW_1(t)+\sigma_2I(t)dW_2(t))\\&\;\;\;+\int_Ze^{p\chi_2t}[S(t)+I(t)]^{2p}\big((1+\eta_1(u)\vee \eta_2(u))^{2p}-\eta_1(u)\wedge\eta_2(u)\big)\tilde{\mathcal{N}}(dt,du).
\end{align*}
Then by taking integrations and taking the expectations, we get
\begin{align*}
(S(t)+I(t))^{2p}&\leq(S(0)+I(0))^{2p}e^{-p\chi_2t}+2p\chi_1\int^t_0 e^{p\chi_2(t-s)}ds\\&\leq(S(0)+I(0))^{2p}e^{-p\chi_2t}+\frac{2\chi_1}{\chi_2}.
\end{align*}
Obviously, we obtain
\begin{align*}
\underset{t\to +\infty}{\lim \sup} \frac{1}{t}\int^t_0\E(S(t)+I(t))^{2p}(u))du\leq (S(0)+I(0))^{2p}\underset{t\to +\infty}{\lim \sup} \frac{1}{t}\int^t_0e^{-p \chi_2u}du+\frac{2\chi_1}{\chi_2}=\frac{2\chi_1}{\chi_2}.
\end{align*}
\end{proof}
\begin{lem}\cite{58}
Let $h(t)>0$, $k(t)\geq 0$ and $G(t)$ be functions on $[0,+\infty)$, $c\geq 0$ and $d>0$ be constants, such that $\underset{t\to\infty}{\lim}\frac{G(t)}{t}=0$ and 
\begin{align*}
\ln h(t)\leq ct+k(t)-d\int^t_0h(s)ds+G(t).
\end{align*}
If $k(t)$ is a non-decreasing function, then 
\begin{align*}
\underset{t\to\infty}{\lim \sup }\frac{1}{t}\bigg(-k(t)+d\int^t_0h(s)ds\bigg)\leq c.
\end{align*}
\label{nlem}
\end{lem}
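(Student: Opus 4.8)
The plan is to convert the hypothesis, which is an integral inequality, into a first-order differential inequality and integrate it with the classical integrating-factor (Gr\"onwall-type) device, then read off the asymptotics. First I would set $\Phi(t)=\int_0^t h(s)\,ds$, so that $\Phi(0)=0$ and, because $h>0$ is locally integrable, $\Phi$ is absolutely continuous with $\Phi'(t)=h(t)$ for almost every $t$. Exponentiating the assumed bound gives $h(t)\le e^{ct+k(t)-d\Phi(t)+G(t)}$, and multiplying by $e^{d\Phi(t)}$ one recognizes the derivative of $e^{d\Phi(t)}$:
\[
\frac{d}{dt}\Big(e^{d\Phi(t)}\Big)=d\,h(t)\,e^{d\Phi(t)}\le d\,e^{ct+k(t)+G(t)}\qquad\text{a.e.}
\]
Integrating from $0$ to $t$ and using $\Phi(0)=0$ yields
\[
e^{d\Phi(t)}\le 1+d\int_0^t e^{cs+k(s)+G(s)}\,ds .
\]

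The second step uses the two structural hypotheses, each exactly once. Since $k$ is non-decreasing, $k(s)\le k(t)$ for all $s\le t$, so $e^{k(s)}$ can be pulled out of the integral as the factor $e^{k(t)}$, giving $e^{d\Phi(t)}\le 1+d\,e^{k(t)}\int_0^t e^{cs+G(s)}\,ds$. To control the remaining integral I would invoke $\lim_{t\to\infty}G(t)/t=0$: fixing $\varepsilon>0$, choose $T_\varepsilon$ with $G(s)\le\varepsilon s$ for $s\ge T_\varepsilon$, and split $\int_0^t=\int_0^{T_\varepsilon}+\int_{T_\varepsilon}^t$. The first piece is a finite constant $C_\varepsilon$, and the tail obeys $\int_{T_\varepsilon}^t e^{(c+\varepsilon)s}\,ds\le (c+\varepsilon)^{-1}e^{(c+\varepsilon)t}$, whence for all large $t$
\[
\int_0^t e^{cs+G(s)}\,ds\le \widetilde C_\varepsilon\,e^{(c+\varepsilon)t}
\]
for a suitable constant $\widetilde C_\varepsilon$.

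Combining the two displays, and noting that the exponent $k(t)+(c+\varepsilon)t$ is nonnegative (so $e^{k(t)+(c+\varepsilon)t}\ge 1$ and the additive $1$ can be absorbed), I obtain $e^{d\Phi(t)}\le (1+d\widetilde C_\varepsilon)\,e^{k(t)+(c+\varepsilon)t}$. Taking logarithms gives
\[
d\Phi(t)-k(t)\le \ln\!\big(1+d\widetilde C_\varepsilon\big)+(c+\varepsilon)t ,
\]
and dividing by $t$ and letting $t\to\infty$ yields
\[
\limsup_{t\to\infty}\frac{1}{t}\Big(d\int_0^t h(s)\,ds-k(t)\Big)\le c+\varepsilon ;
\]
since $\varepsilon>0$ is arbitrary, sending $\varepsilon\to0$ gives the claimed bound $\le c$. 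The computation is essentially routine; the only delicate points are justifying $\Phi'=h$ a.e.\ (hence the a.e.\ differentiation of $e^{d\Phi}$, which needs only local integrability of $h$) and placing the two hypotheses correctly. I expect the main thing to get right is the bookkeeping of the $\varepsilon$-splitting: the monotonicity of $k$ must be used to factor out $e^{k(t)}$ \emph{before} splitting, and the sublinearity of $G$ must be applied only on the tail $[T_\varepsilon,t]$ so that the constant $C_\varepsilon$ stays finite and does not corrupt the leading exponential rate.
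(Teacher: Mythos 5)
Your proof is correct; note that the paper itself never proves this lemma (it is stated with a citation to reference [58]), and your argument is precisely the standard integrating-factor proof used in that cited literature: exponentiate the hypothesis, recognize $\frac{d}{dt}e^{d\Phi(t)} = d\,h(t)e^{d\Phi(t)} \le d\,e^{ct+k(t)+G(t)}$, integrate, pull out $e^{k(t)}$ by monotonicity, and absorb $G$ into an $\varepsilon$-perturbed exponential rate before taking logarithms. The only point worth flagging is the implicit regularity your argument needs — local integrability of $h$ (so that $\Phi$ is absolutely continuous) and of $e^{G}$ on compact intervals (so that $C_\varepsilon<\infty$) — which is harmless here since in the paper's application $h$, $k$, and $G$ are continuous sample paths.
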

\begin{lem}[\cite{10}]
Let $X(t)\in \R^n$ be a stochastic Feller process, then either an ergodic probability measure exists, or 
\begin{align}
\underset{t\to \infty}{\lim }\underset{\nu}{\sup }\frac{1}{t}\int^t_0\int \P(u,x,\Sigma)\nu(dx)du=0,\hspace{0.2cm}\mbox{for any compact set}\hspace{0.2cm}\Sigma\in\R^n,
\label{imp}
\end{align}
where the supremum is taken over all initial distributions $\nu$ on $R^d$ and $\P(t,x,\Sigma)$ is the probability for $X(t)\in \Sigma$ with $X(0)=x\in \R^n$.
\label{poss}
\end{lem}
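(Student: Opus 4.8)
Write $P_u$ for the transition semigroup of $X$, so that $\P(u,x,\cdot)$ is its kernel, and for an initial law $\nu$ and a horizon $t>0$ introduce the occupation (Ces\`aro) measure
\begin{align*}
\Xi_{t,\nu}(B)=\frac{1}{t}\int_0^t\int_{\R^n}\P(u,x,B)\,\nu(dx)\,du,\qquad B\in\mathfrak{B}(\R^n),
\end{align*}
which is a Borel probability measure because $X$ is non-exploding. In this notation the second alternative (\ref{imp}) reads $\lim_{t\to\infty}\sup_\nu\Xi_{t,\nu}(\Sigma)=0$ for every compact $\Sigma$. My plan is to show the two alternatives are exhaustive by establishing the contrapositive: if (\ref{imp}) fails then an ergodic probability measure exists. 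Mutual exclusivity will then follow by a short direct argument.

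First I would negate (\ref{imp}). This produces a compact set $K\subset\R^n$, a constant $\epsilon_0>0$, times $t_n\uparrow\infty$ and initial laws $\nu_n$ with $\Xi_{t_n,\nu_n}(K)\ge\epsilon_0$ for every $n$. Viewing $\{\Xi_{t_n,\nu_n}\}$ inside the vaguely compact space of sub-probability measures on $\R^n$, I extract a vaguely convergent subsequence $\Xi_{t_{n_k},\nu_{n_k}}\to\mu$. Choosing $f\in C_c(\R^n)$ with $\mathbf 1_K\le f\le 1$, vague convergence yields
\begin{align*}
\mu(\R^n)\ \ge\ \int f\,d\mu\ =\ \lim_{k}\int f\,d\Xi_{t_{n_k},\nu_{n_k}}\ \ge\ \epsilon_0,
\end{align*}
since $\int f\,d\Xi_{t_{n_k},\nu_{n_k}}\ge\Xi_{t_{n_k},\nu_{n_k}}(K)\ge\epsilon_0$; thus the limit retains strictly positive mass and does not wholly escape to infinity.

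The core step is to verify that $\mu$ is invariant. For $f\in C_c(\R^n)$ and $s>0$ the Feller property keeps $P_sf\in C_0(\R^n)$, and the Krylov--Bogolyubov identity
\begin{align*}
\int P_sf\,d\Xi_{t_{n_k},\nu_{n_k}}-\int f\,d\Xi_{t_{n_k},\nu_{n_k}}=\frac{1}{t_{n_k}}\left(\int_{t_{n_k}}^{t_{n_k}+s}\!\!\!\int P_uf\,d\nu_{n_k}\,du-\int_0^{s}\!\!\!\int P_uf\,d\nu_{n_k}\,du\right)
\end{align*}
has right-hand side bounded in absolute value by $2s\|f\|_\infty/t_{n_k}\to0$. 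Passing to the vague limit on the left --- legitimate precisely because both $f$ and $P_sf$ lie in $C_0(\R^n)$ --- gives $\int P_sf\,d\mu=\int f\,d\mu$ for all $s>0$ and all $f\in C_c(\R^n)$, i.e. $\mu$ is invariant. Normalising, $\pi:=\mu/\mu(\R^n)$ is an invariant probability measure; since the invariant probability measures form a convex, weak-$*$ compact set whose extreme points are ergodic, the Krein--Milman / ergodic-decomposition theorem furnishes an ergodic probability measure, which is the first alternative.

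Finally, for mutual exclusivity, if an invariant probability measure $\pi$ existed alongside (\ref{imp}), then taking $\nu=\pi$ gives $\Xi_{t,\pi}=\pi$ for all $t$, while $\pi(K)>0$ for some compact $K$ (any probability measure charges a compact set), contradicting $\sup_\nu\Xi_{t,\nu}(K)\to0$; hence the two alternatives cannot co-occur. The hard part, and the step I expect to be the main obstacle, is the invariance of the vague limit $\mu$: one must rule out a silent loss of mass to infinity when passing to the limit, and this is exactly what the $C_0$-Feller property secures, by keeping $P_sf$ within a class on which vague convergence operates. Without it, $\mu$ could fail to be invariant or the mass retained on $K$ could dissipate. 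The remaining ingredients --- vague relative compactness and the non-explosion that makes each $\Xi_{t,\nu}$ a genuine probability measure --- are comparatively routine.
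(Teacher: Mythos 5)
The first thing to say is that the paper contains no proof of this lemma at all: it is imported wholesale from reference \cite{10} (it is the classical ``mutually exclusive possibilities'' result of Stettner type), so your argument is not paralleling anything in the paper --- you are supplying a proof where the authors give only a citation. Judged on its own terms, your route is the standard Krylov--Bogolyubov argument and it is essentially correct: the occupation measures $\Xi_{t,\nu}$, the vague sequential compactness of sub-probability measures on $\R^n$, the retention of mass at least $\epsilon_0$ on the compact set $K$, the semigroup identity with error bounded by $2s\|f\|_\infty/t_{n_k}$, and the passage from an invariant probability measure to an ergodic one are all sound. Two small points: in the last step, prefer the ergodic decomposition theorem on Polish spaces over Krein--Milman, since the set of invariant probability laws is convex and closed but need not be weak-$*$ compact (you invoke both; only the former is safe). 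The exclusivity argument via $\nu=\pi$ and tightness of $\pi$ is fine.

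The one substantive caveat concerns the meaning of ``Feller''. Your limit passage uses that $P_s f\in C_0(\R^n)$ whenever $f\in C_c(\R^n)$, i.e.\ the $C_0$-Feller property. But the Feller property that the authors verify for the jump-diffusion (\ref{s2}) (and the hypothesis under which the cited result is stated) is the weak one: $P_s$ maps $C_b$ into $C_b$. Under that weaker hypothesis your proof still goes through, but it needs a patch that you explicitly disclaim. For $f\in C_c$ with $f\geq 0$, the function $P_s f$ is nonnegative, bounded and continuous, hence lower semicontinuous, and the vague portmanteau inequality gives
\begin{align*}
\int P_s f\,d\mu\ \leq\ \underset{k\to\infty}{\lim\inf}\int P_s f\,d\Xi_{t_{n_k},\nu_{n_k}}\ =\ \underset{k\to\infty}{\lim}\int f\,d\Xi_{t_{n_k},\nu_{n_k}}\ =\ \int f\,d\mu,
\end{align*}
where the middle equality uses your Krylov--Bogolyubov identity. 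By regularity this yields subinvariance $\mu P_s\leq\mu$ as measures; since the process is conservative, $\mu P_s(\R^n)=\mu(\R^n)$, so the nonnegative measure $\mu-\mu P_s$ has zero total mass and therefore $\mu P_s=\mu$. In other words, your closing claim that without the $C_0$-Feller property ``$\mu$ could fail to be invariant'' is too pessimistic: one-sided vague estimates plus conservation of total mass rescue the argument, and with this patch your proof covers exactly the weak-Feller setting in which the paper actually invokes the lemma.
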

\begin{proof}[Proof of Theorem \ref{thm1}]
Similar to the proof of Lemma 3.2. in \cite{11}, we briefly verify the Feller property of the SDE model (\ref{s2}). The main purpose of the next analysis is to prove that (\ref{imp}) is impossible. \\Applying Itô's formula gives
\begin{align}
d\ln I(t)&=\Big(\beta S(t)-(\mu_2+\gamma)-\frac{\sigma_2^2}{2}-\int_Z \eta_2(u)-\ln(1+\eta_2(u))\nu(du)\Big)dt\nonumber\\&\;\;\;+\sigma_2dW_2(t)+\int_Z\ln(1+\eta_2(u))\tilde{\mathcal{N}}(dt,du).
\label{itoln}
\end{align}
Therefore
\begin{align*}
d\Big\{\ln I(t)-\frac{\beta}{\mu_1}\big(\psi(t)-S(t)\big)\Big\}&=\Big(\beta S(t)-(\mu_2+\gamma)-\frac{\sigma_2^2}{2}-\int_Z \eta_2(u)-\ln(1+\eta_2(u))\nu(du)\Big)dt\\&\;\;\;-\frac{\beta}{\mu_1}\Big(-\mu_1(\psi(t)-S(t))+\beta S(t)I(t)\Big)dt+\sigma_2dW_2(t)\\&\;\;\;-\frac{\beta}{\mu_1}(\psi(t)-S(t))dW_1(t)+\int_Z\ln(1+\eta_2(u))\tilde{\mathcal{N}}(dt,du)\\&\;\;\;-\frac{\beta }{\mu_1}\int_Z \eta_1(u)(\psi(t)-S(t))\tilde{\mathcal{N}}(dt,du).
\end{align*}
Hence
\begin{align}
d\Big\{\ln I(t)-\frac{\beta}{\mu_1}\big(\psi(t)-S(t)\big)\Big\}\nonumber&=\Big(\beta \psi(t)-(\mu_2+\gamma)-\frac{\sigma_2^2}{2}-\int_Z \eta_2(u)-\ln(1+\eta_2(u))\nu(du)\Big)dt\nonumber\\&\;\;\;-\frac{\beta^2 S(t)I(t)}{\mu_1}dt+\sigma_2dW_2(t)-\frac{\beta}{\mu_1}(\psi(t)-S(t))dW_1(t)\nonumber\\&\;\;\;+\int_Z\ln(1+\eta_2(u))\tilde{\mathcal{N}}(dt,du)-\frac{\beta }{\mu_1}\int_Z \eta_1(u)(\psi(t)-S(t))\tilde{\mathcal{N}}(dt,du).
\label{ito}
\end{align}
Integrating from $0$ to $t$ on both sides of (\ref{ito}) yields
\begin{align*}
&\ln\frac{I(t)}{I(0)}-\frac{\beta}{\mu_1}(\psi(t)-S(t))+\frac{\beta}{\mu_1}(\psi(0)-S(0))\nonumber\\&=\int_0^t\beta \psi(s)ds-\Big((\mu_2+\gamma)+\frac{\sigma_2^2}{2}+\int_Z \eta_2(u)-\ln(1+\eta_2(u))\nu(du)\Big)\nonumber\\&\;\;\;-\frac{\beta^2}{\mu_1}\int_0^t S(s)I(s)ds+\sigma_2W_2(t)-\frac{\beta}{\mu_1}\int_0^t(\psi(s)-S(s))dW_1(s)\nonumber\\&\;\;\;+\int_0^t\int_Z\ln(1+\eta_2(u))\tilde{\mathcal{N}}(ds,du)-\frac{\beta }{\mu_1}\int_0^t\int_Z \eta_1(u)(\psi(s)-S(s))\tilde{\mathcal{N}}(ds,du).
\end{align*}
Then we have
\begin{align}
\int_0^t\beta S(s)I(s)ds&=\frac{\mu_1}{\beta}\int_0^t\beta \psi(s)ds-\frac{\mu_1}{\beta}\Big((\mu_2+\gamma)+\frac{\sigma_2^2}{2}+\int_Z \eta_2(u)-\ln(1+\eta_2(u))\nu(du)\Big)\nonumber\\&\;\;\;+(\psi(t)-S(t))-(\psi(0)-S(0))-\frac{\mu_1}{\beta}\ln\frac{I(t)}{I(0)}+\frac{\mu_1\sigma_2}{\beta}W_2(t)\nonumber\\&\;\;\;-\int_0^t(\psi(s)-S(s))dW_1(s)+\frac{\mu_1}{\beta}\int_0^t\int_Z\ln(1+\eta_2(u))\tilde{\mathcal{N}}(ds,du)\nonumber\\&\;\;\;-\int_0^t\int_Z \eta_1(u)(\psi(s)-S(s))\tilde{\mathcal{N}}(ds,du).
\label{ito1}
\end{align}
Let
\begin{align*}
M_1(t)&=\frac{\mu_1\sigma_2}{\beta}W_2(t)-\int_0^t(\psi(s)-S(s))dW_1(s)\\&\;\;\;+\frac{\mu_1}{\beta}\int_0^t\int_Z\ln(1+\eta_2(u))\tilde{\mathcal{N}}(ds,du)\\&\;\;\;-\int_0^t\int_Z \eta_1(u)(\psi(s)-S(s))\tilde{\mathcal{N}}(ds,du).
\end{align*}
We know that $ \int_0^t\int_Z\ln(1+\eta_2(u))\tilde{\mathcal{N}}(ds,du)$ is a local martingale with quadratic variation
\begin{align*}
&\Big\langle \int_0^t\int_Z\ln(1+\eta_2(u))\tilde{\mathcal{N}}(ds,du),\int_0^t\int_Z\ln(1+\eta_2(u))\tilde{\mathcal{N}}(ds,du)\Big\rangle=\Big(\int_Z\big(\ln(1+\eta_2(u)\big)^2\nu(du)\Big)t.
\end{align*}
By using the Strong Low of Large Numbers, we get $\underset{t\to +\infty}{\lim }\frac{M_1(t)}{t}=0$, a.s.\\
From the system (\ref{s2}), we obtain
\begin{align}
d(S(t)+I(t))&=\big(A-\mu_1S(t)-(\mu_2+\gamma)I(t)\big)dt+\sigma_1S(t)dW_1(t)\nonumber\\&\;\;\;+\sigma_2I(t)dW_2(t)+\int_Z\big(\eta_1(u)S(t^{-})+\eta_2(u)I(t^{-})\big)\tilde{\mathcal{N}}(ds,du).
\label{si}
\end{align}
Applying Itô's formula to the equality (\ref{si}) gives that
\begin{align*}
d\ln\bigg(\frac{1}{S(t)+I(t)}\bigg)&=\frac{-A}{S(t)+I(t)}+\frac{\mu_1S(t)+(\mu_2+\gamma)I(t)}{S(t)+I(t)}+\frac{\sigma_1^2S^2(t)+\sigma_2^2I^2(t)}{2(S(t)+I(t))^2}\\&\;\;\;-\int_Z\bigg( \ln\frac{(1+\eta_1(u))S(t)+(1+\eta_2(u))I(t)}{S(t)+I(t)}-\frac{\eta_1(u)S(t)+\eta_2I(t)}{S(t)+I(t)}\bigg)\nu(du)\\&\;\;\;-\frac{\sigma_1S(t)}{S(t)+I(t)}dW_1(t)-\frac{\sigma_2I(t)}{S(t)+I(t)}dW_2(t)\\&\;\;\;-\int_Z\ln\frac{(1+\eta_1(u))S(t)+(1+\eta_2(u))I(t)}{S(t)+I(t)}\tilde{\mathcal{N}}(dt,du).
\end{align*}
Taking integration, we get
\begin{align*}
\ln\bigg(\frac{1}{S(t)+I(t)}\bigg)&=\ln\bigg(\frac{1}{S(0)+I(0)}\bigg)-A\int_0^t\frac{1}{S(s)+I(s)}ds+M_2(t)+M_3(t),
\end{align*}
where
\begin{align*}
M_2(t)&=\int_0^t\frac{\mu_1 S(s)+(\mu_2+\gamma)I(s)}{S(s)+I(s)}ds+\int_0^t\frac{\sigma_1^2S^2(s)+\sigma_2^2I^2(s)}{2(S(s)+I(s))^2}ds\\&\;\;\;-\int_0^t\int_Z\bigg( \ln\frac{(1+\eta_1(u))S(s)+(1+\eta_2(u))I(s)}{S(s)+I(s)}-\frac{\eta_1(u)S(s)+\eta_2I(s)}{S(s)+I(s)}\bigg)\nu(du)ds,
\end{align*}
and
\begin{align*}
M_3(t)&=-\int^t_0\frac{\sigma_1S(s)}{S(s)+I(s)}dW_1(s)-\int^t_0\frac{\sigma_2I(s)}{S(s)+I(s)}dW_1(s)\\&\;\;\;-\int^t_0\int_Z\ln\frac{(1+\eta_1(u))S(s)+(1+\eta_2(u))I(s)}{S(s)+I(s)}\tilde{\mathcal{N}}(ds,du).
\end{align*}
By lemma \ref{nlem}, we get
\begin{align*}
\underset{t\to +\infty}{\lim \sup}\frac{1}{t}\bigg(\int^t_0\frac{A}{S(s)+I(s)}ds-M_2(t)\bigg)\leq 0\;\;\;\mbox{a.s.}
\end{align*}
Then
\begin{align*}
\underset{t\to +\infty}{\lim \sup}\frac{1}{t}\ln\big(S(t)+I(t)\big)\leq 0 \;\;\;\mbox{a.s.}
\end{align*}
Hence
\begin{align*}
\underset{t\to +\infty}{\lim \sup}\frac{1}{t}\ln\frac{I(t)}{I(0)}\leq \underset{t\to +\infty}{\lim \sup}\frac{1}{t}\ln\frac{\big(S(t)+I(t)\big)}{I(0)}\leq 0 \;\;\;\mbox{a.s.}
\end{align*}
Thus, it follows from (\ref{ito1}) that
\begin{align*}
&\underset{t\to +\infty}{\lim \inf} \frac{1}{t}\int^t_0 \beta S(s)I(s)du\\&\geq\frac{\mu_1}{\beta}\bigg(\underset{t\to +\infty}{\lim \inf} \frac{1}{t}\int^t_0\beta \psi(s)ds-\Big((\mu_2+\gamma)+\frac{\sigma_2^2}{2}+\int_Z \eta_2(u)-\ln(1+\eta_2(u))\nu(du)\Big)\bigg)\\&=\frac{\mu_1}{\beta}\bigg(\underset{t\to +\infty}{\lim } \frac{1}{t}\int^t_0\beta \psi(s)ds-\Big((\mu_2+\gamma)+\frac{\sigma_2^2}{2}+\int_Z \eta_2(u)-\ln(1+\eta_2(u))\nu(du)\Big)\bigg)\\
&=\frac{\mu_1}{\beta}(\mathcal{T}_0^s-1)>0\hspace{0.5cm}\mbox{a.s.}
\end{align*}
To continue our analysis, we need to set the following subsets: $\Omega_1=\{(S,I,R)\in\R^3_+|\hspace{0.1cm}S\geq \epsilon,\hspace{0.1cm}\mbox{and},\hspace{0.1cm} I\geq \epsilon\}$, $\Omega_2=\{(S,I,R)\in\R^3_+|\hspace{0.1cm}S\leq \epsilon\}$ and $\Omega_3=\{(S,I,R)\in\R^3_+|\hspace{0.1cm}I\leq \epsilon\}$ where $\epsilon>0$ is a positive constant to be determined later. Therefore, we get
\begin{align*}
&\underset{t\to +\infty}{\lim \inf}\frac{1}{t}\int^t_0\E\Big( \beta S(s)I(s)\mathbf{1}_{\Omega_1}\Big)ds\\&\geq \underset{t\to +\infty}{\lim \inf}\frac{1}{t}\int^t_0\E\Big( \beta S(s)I(s)\Big)ds-\underset{t\to +\infty}{\lim \sup}\frac{1}{t}\int^t_0\E\Big( \beta S(s)I(s)\mathbf{1}_{\Omega_2}\Big)ds-\underset{t\to +\infty}{\lim \sup}\frac{1}{t}\int^t_0\E\Big(\beta S(s)I(s)\mathbf{1}_{\Omega_3}\Big)ds\\&\geq \frac{\mu_1}{\beta}(\mathcal{T}_0^s-1)-\beta \epsilon \underset{t\to +\infty}{\lim \sup}\frac{1}{t}\int^t_0\E(I(s))ds-\beta \epsilon \underset{t\to +\infty}{\lim \sup}\frac{1}{t}\int^t_0\E(S(s))ds.
\end{align*}
By lemma \ref{L1}, we see that
\begin{align*}
\underset{t\to +\infty}{\lim \inf}\frac{1}{t}\int^t_0\E\Big( \beta S(s)I(s)\mathbf{1}_{\Omega_1}\Big)ds&\geq \frac{\mu_1}{\beta}(\mathcal{T}_0^s-1)- \frac{2A\beta \epsilon}{\mu_1-\ell}.
\end{align*}
We can choose $\epsilon \leq \frac{\mu_1}{4\beta^2A}(\mu_1-\ell)(\mathcal{T}_0^s-1)$, and then we obtain
\begin{align}
\underset{t\to +\infty}{\lim \inf}\frac{1}{t}\int^t_0\E\Big( \beta S(u)I(u)\mathbf{1}_{\Omega_1}\Big)du&\geq \frac{\mu_1}{2\beta}(\mathcal{T}_0^s-1)>0 \;\;\;\mbox{a.s.}
\label{22}
\end{align}
Let $q=a_0>1$ be a positive integer such that $1<p=\frac{a_0}{a_0-1}$, $\mu_1-\frac{(2p-1)}{2}\max\{\sigma_1^2,\sigma_2^2\}-\frac{1}{2p}\ell> 0$ and $\frac{1}{q}+\frac{1}{p}=1$. By utilizing the Young inequality $xy\leq \frac{x^p}{p}+\frac{y^q}{q}$ for all $x$,$y>0$, we get
\begin{align*}
&\underset{t\to +\infty}{\lim \inf}\frac{1}{t}\int^t_0\E\big( \beta S(u)I(u)\mathbf{1}_{\Omega_1}\big)du\\&\leq \underset{t\to +\infty}{\lim \inf}\frac{1}{t}\int^t_0\E\bigg(p^{-1}(\eta \beta S(u)I(u))^p+q^{-1}\eta^{-q}\mathbf{1}_{\Omega_1}\bigg)du\\&\leq \underset{t\to +\infty}{\lim \inf}\frac{1}{t}\int^t_0 \E\big(q^{-1}\eta^{-q}\mathbf{1}_{\Omega_1}\big)du+p^{-1}(\eta \beta)^p\underset{t\to +\infty}{\lim \sup}\frac{1}{t}\int^t_0\E\big((S(u)+I(u))^{2p}\big)du,
\end{align*}
where $\eta$ is a positive constant satisfying
\begin{align*}
\eta^p\leq\frac{p\mu_1\chi_1\beta^{-(p+1)}}{8 \chi_2}(\mathcal{T}_0^s-1).
\end{align*}
By lemma \ref{L1} and (\ref{22}), we deduce that
\begin{align}
\underset{t\to +\infty}{\lim \inf}\frac{1}{t}\int^t_0\E(\mathbf{1}_{\Omega_1})du&\geq q\eta^q\Bigg(\frac{\mu_1}{2\beta}(\mathcal{T}_0^s-1)-\frac{2\chi_2\eta^p\beta^p}{p\chi_1}\Bigg)\geq \frac{\mu_1q\eta^q}{4\beta}(\mathcal{T}_0^s-1)>0\;\;\;\mbox{a.s.}
\label{23}
\end{align}
Setting $\Omega_4=\{(S,I,R)\in\R^3_+|\hspace{0.1cm}S\geq \zeta,\hspace{0.1cm}\mbox{or},\hspace{0.1cm} I\geq \zeta\}$ and $\Sigma=\{(S,I,R)\in\R^3_+|\hspace{0.1cm}\epsilon \leq S\leq \zeta,\hspace{0.1cm}\mbox{and},\hspace{0.1cm} \epsilon \leq I\leq \zeta\}$ where $\zeta>0$ is a positive constant to be explained in the following. By using the Tchebychev inequality, we can observe that
\begin{align*}
\E(\mathbf{1}_{\Omega_4})\leq \P(S(t)\geq \zeta )+\P(I(t)\geq \zeta )\leq\frac{1}{\zeta}\E(S(t)+I(t))\leq\frac{1}{\zeta}\bigg(\frac{2A}{\mu_1-\ell}+\big(S(0)+I(0)\big)\bigg).
\end{align*}
Choosing $\frac{1}{\zeta}\leq \frac{\mu_1 q\eta^q}{8\beta}(\mathcal{T}_0^s-1)\Big(\frac{2A}{\mu_1-\ell}+\big(S(0)+I(0)\big)\Big)^{-1}$. We thus obtain
\begin{align*} 
\underset{t\to +\infty}{\lim \sup}\frac{1}{t}\int^t_0\E(\mathbf{1}_{\Omega_4})du&\leq \frac{\mu_1 q\eta^q}{8\beta}(\mathcal{T}_0^s-1).
\end{align*}
According to (\ref{23}), one can derive that
\begin{align*}
\underset{t\to +\infty}{\lim \inf}\frac{1}{t}\int^t_0\E(\mathbf{1}_{\Sigma})du&\geq \underset{t\to +\infty}{\lim \inf}\frac{1}{t}\int^t_0\E(\mathbf{1}_{\Omega_1})du-\underset{t\to +\infty}{\lim \sup}\frac{1}{t}\int^t_0\E(\mathbf{1}_{\Omega_4})du \geq\frac{\mu_1  q\eta^q}{8\beta}\mathcal{T}_0^s>0\;\;\;\mbox{a.s.}
\end{align*}
Based on the above analysis,  we have determined a compact domain $\Sigma\subset\R^3_+$ such that
\begin{align}
\underset{t\to +\infty}{\lim \inf}\frac{1}{t}\int^t_0\P(u,(S(0),I(0),R(0)),\Sigma)du\geq \frac{\mu_1  q\eta^q}{8\beta}(\mathcal{T}_0^s-1)>0.
\label{end}
\end{align}
By (\ref{end}), we show that (\ref{imp}) is unverifiable. Applying similar arguments to those in \cite{11,12}, we show the uniqueness of the ergodic stationary distribution of our model (\ref{s2}).\\

 Now, we will prove that if $\mathcal{T}^s_0<1$, we have the exticntion of the disease. In view of (\ref{itoln}) and lamma \ref{lemmas} , we get that
 \begin{align*}
\underset{t\to \infty}{\lim \sup}\frac{1}{t}\ln\frac{ I(t)}{I(0)}&=\beta\underset{t\to \infty}{\lim \sup}\int_0^t S(s)ds-\Big((\mu_2+\gamma)+\frac{\sigma_2^2}{2}+\int_Z \eta_2(u)-\ln(1+\eta_2(u))\nu(du)\Big)\\&\leq\beta\underset{t\to \infty}{\lim}\int_0^t \psi(s)ds-\Big((\mu_2+\gamma)+\frac{\sigma_2^2}{2}+\int_Z \eta_2(u)-\ln(1+\eta_2(u))\nu(du)\Big)\\
&=(\mu_2-\gamma)\Big(\mathcal{T}^s_0-1\Big)<0\hspace*{0.3cm}\mbox{a.s.}
\end{align*}
This completes the proof.
\end{proof}
\section{Example}\label{sec3}
In this section, we will validate our theoretical result with the help of numerical simulations taking parameters from the theoretical data mentioned in the table \ref{value}. We numerically simulate the solution of the system (\ref{s2}) with initial value $(S(0), I(0), R(0)) = (0.4, 0.3, 0.1)$. For the purpose of showing the effects of the perturbations on the disease dynamics, we have realized the simulation $15000$ times. 
\begin{center}
\begin{tabular}{llll}
\hline
Parameters  \hspace*{0.5cm}&Description  \hspace*{0.5cm}&Value  \\
\hline
$A$ & The recruitment rate & 0.09 \\
$\mu_1$ & The natural mortality rate & 0.05\\
$\beta$ & The transmission rate &0.06 \\
$\gamma$ & The recovered rate &0.01 \\
$\mu_2$ &The general mortality &0.09 \\
\hline
\end{tabular}
\captionof{table}{Some theoretical parameter values of the model (\ref{s2}).}
\label{value}
\end{center}
\begin{figure}[H]
\subcaptionbox{The left figure is the stationary distribution for S(t), the right picture is the stationary distribution I(t).}
{\includegraphics[width=3.3in]{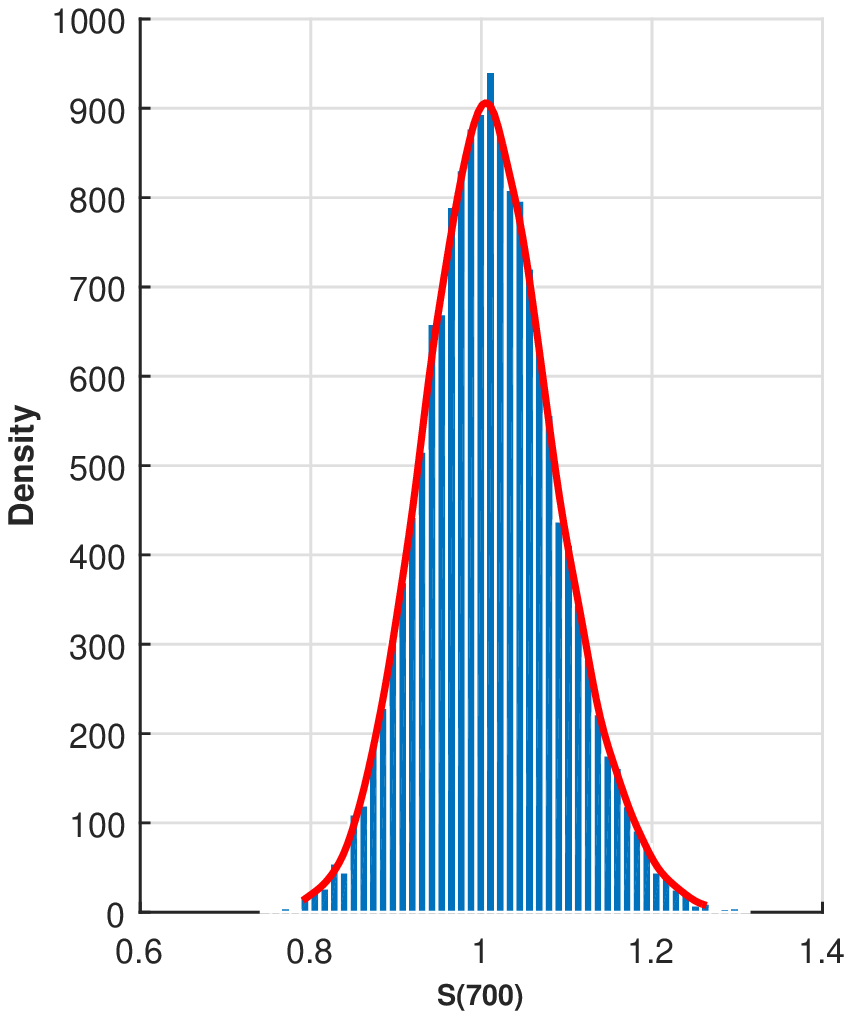} 
\includegraphics[width=3.3in]{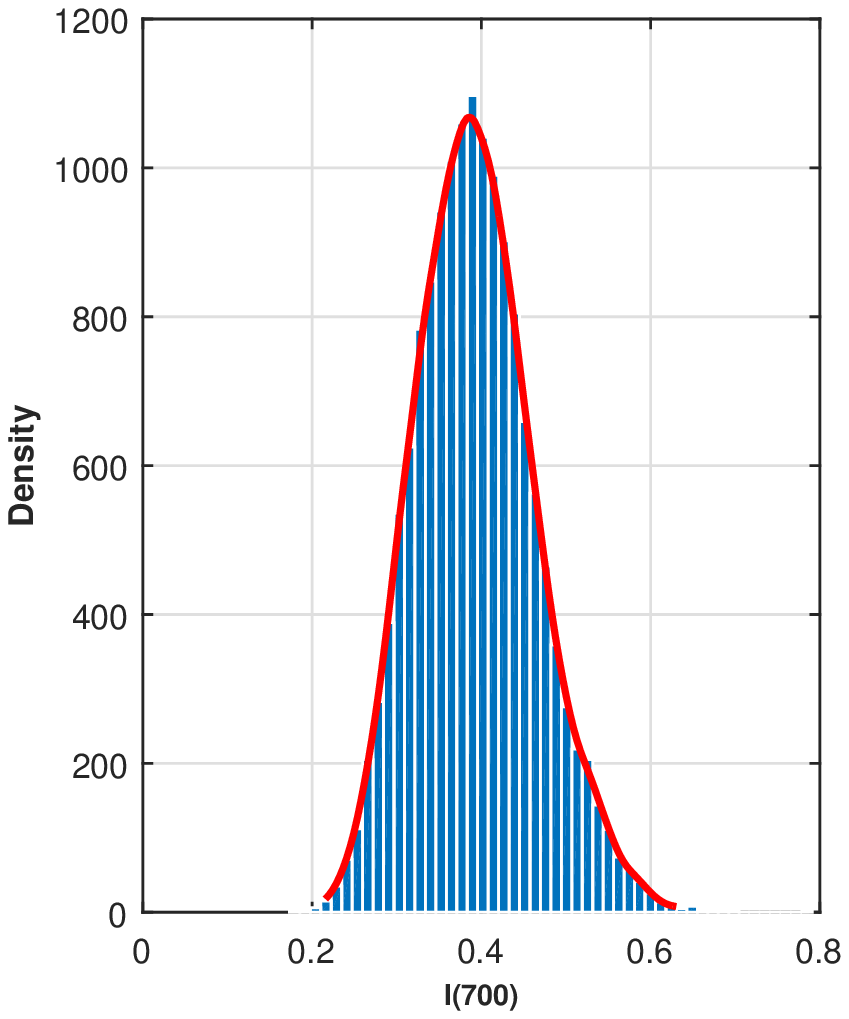}} 
\subcaptionbox{The left figure is the stationary distribution for R(t), the right picture is the trajectory of the solution.}
{\includegraphics[width=3.3in]{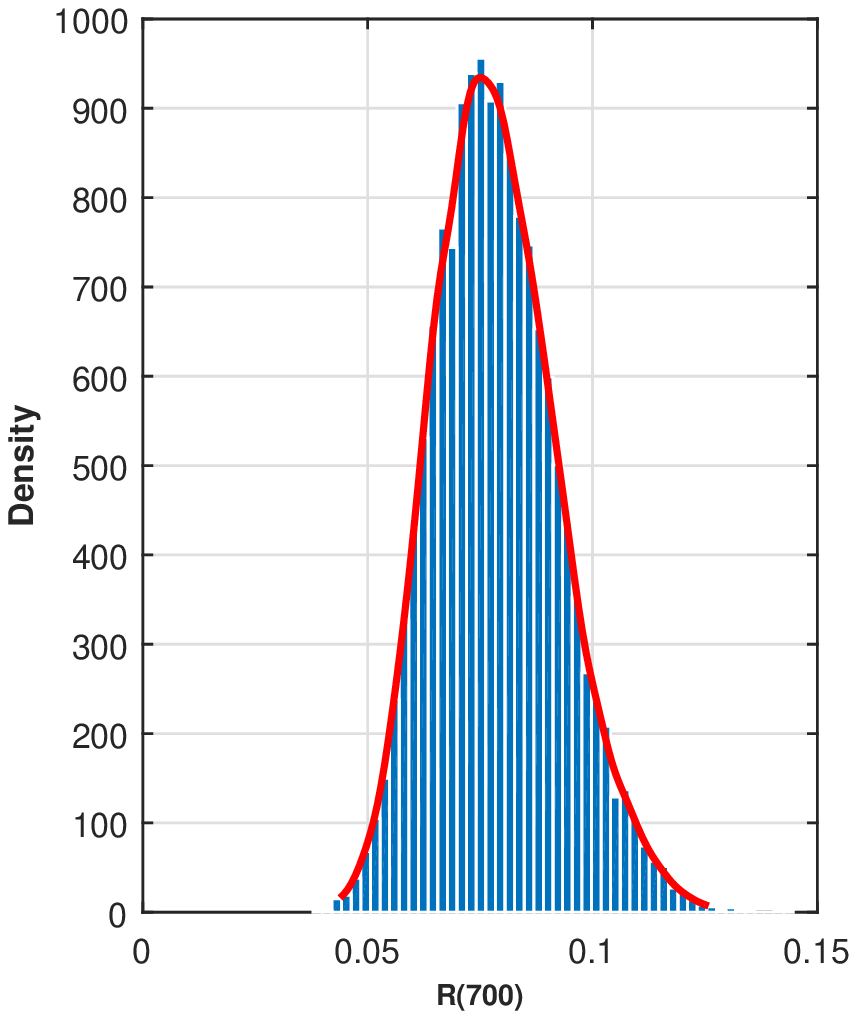} 
\includegraphics[width=3.2in]{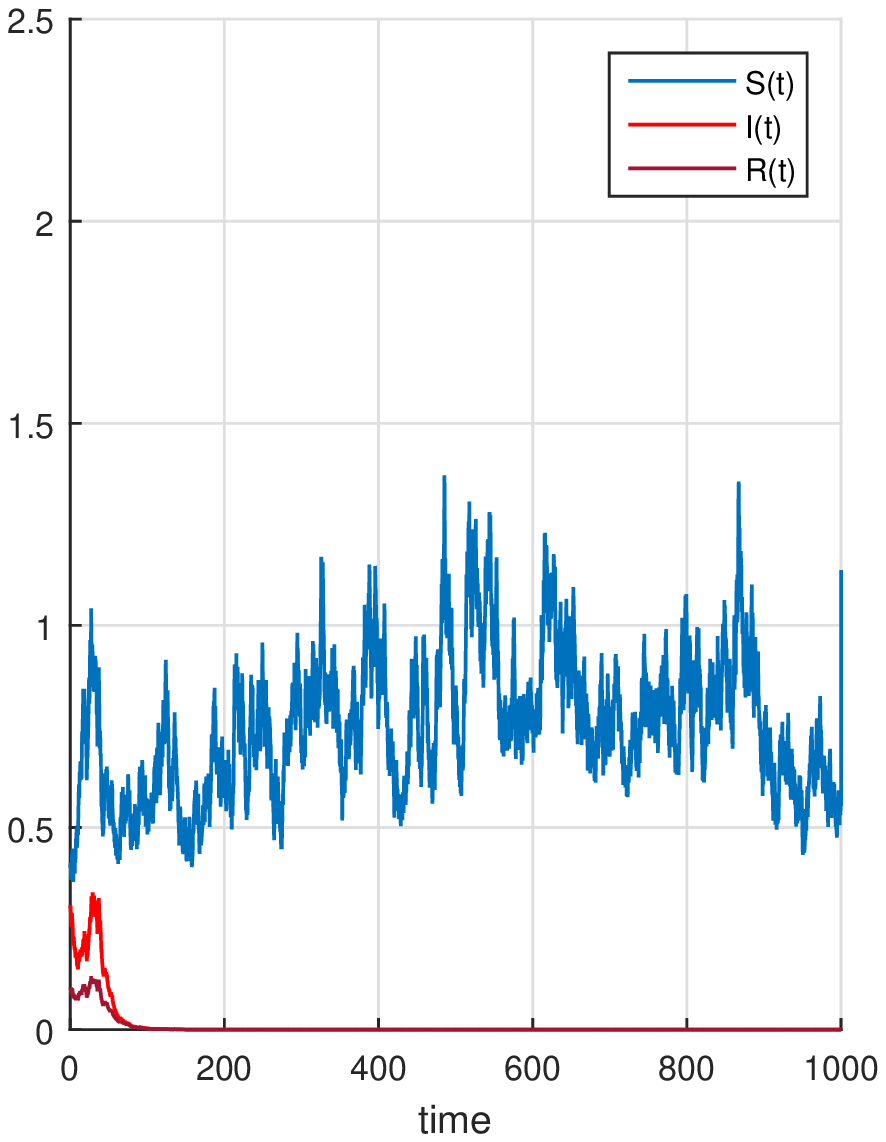} } 
\caption{The numerical illustration of obtained results in the theorem \ref{thm1}.}
\label{fig1}
\end{figure}
We have chosen the stochastic fluctuations intensities $\sigma_1=0.02$, $\sigma_2=0.08$ and $\sigma_3=0.01$. Furthermore, we assume that $\eta_1(u)=0.05$, $\eta_2(u)=0.02$, $\eta_3(u)=0.01$, $Z=(0,\infty)$ and $\nu(Z)=1$. Then, $\mathcal{T}^s_0=1.0460>1$. From figure \ref{fig1}, we show the existence of the unique stationary distributions for $S(t)$, $I(t)$ and $R(t)$ of model (\ref{s2}) at $t = 700$, where the smooth curves are the probability density functions of $S(t)$, $I(t)$ and $R(t)$, respectively (see figure \ref{fig1} (a) and (b)-left). Now, we choose $A=0.08$, Then, $\mathcal{T}^s_0=0.9260<1$. That is, $I(t)$ will tend to zero exponentially with probability one (see figure \ref{fig1} (b)-right).
\section{Conclusion}\label{sec4}
The dissemination of the epidemic diseases presents
a global issue that concerns decision-makers to elude deaths and deterioration of economies.  Many scientists are motivated to understand and suggest the ways for diminishing the epidemic dissemination. The first generation proposed the deterministic models that showed a lack of realism due to the neglecting of environmental perturbations.  Recent studies present a deep understanding of the process of outbreak diseases by taking into account their random aspect. This contribution presents new techniques to analyze the threshold of a stochastic SIR epidemic model with Lévy jumps.  We have based on the following new techniques:
\begin{enumerate}
\item The calculation of the temporary average of a solution of (\ref{s4}) instead of the classic method based on the explicit form of the stationary distribution in the model (\ref{s4}).
\item The use of Feller property and mutually exclusive possibilities lemma for proving the ergodicity of the model (\ref{s2}).
\end{enumerate}
According to the above techniques, our analysis leads to establish the threshold parameter for the existence of an ergodic stationary distribution and the extinction of the disease. 
 \section*{References} 
 %\nocite{*}
%\bibliography{sampleArticle2.bib}
%\bibliographystyle{NAplain}
\bibliographystyle{ieeetr}
\bibliography{double}
\end{document}